\newcommand\mQ{\mathcal Q}  \newcommand\mS{\mathcal S} 
 \newcommand\M{\mathcal M}
 \newcommand\ep{\epsilon} \newcommand\fC{\mathfrak C}
\newcommand\mC{\mathcal C}  \newcommand\mO{\mathcal O}
 \newcommand\mG{\mathcal G}
\newcommand\mH{\mathcal H} \newcommand\mV{\mathcal V} 
 \newcommand\mL{\mathcal L}
\newcommand\bP{\mathbb P} \newcommand\bG{\mathbb G}
\newcommand\bF{\mathbb F} \newcommand\bZ{\mathbb Z}
\newcommand{\bbsm}{\left (\begin{smallmatrix}}      \newcommand{\besm}{\end{smallmatrix}\right )}
\newcommand{\bsm}{\left[ \begin{smallmatrix}}      \newcommand{\esm}{\end{smallmatrix}\right]}
\newcommand{\bbm}{\left [\begin{matrix}}      \newcommand{\bem}{\end{matrix}\right ]}
\newcommand{\beq}{\begin{equation}}      \newcommand{\eeq}{\end{equation}}
\newcommand{\beqn}{\begin{eqnarray}}      \newcommand{\eeqn}{\end{eqnarray}}
\newcommand{\beqs}{\begin{eqnarray*}}      \newcommand{\eeqs}{\end{eqnarray*}}
\newcommand{\bep}{\begin{proof}}      \newcommand{\eep}{\end{proof}}
\newcommand{\stirling}[2]{\genfrac{[}{]}{0pt}{1}{#1}{#2}}
\newcommand{\Stirling}[2]{\genfrac{[}{]}{0pt}{0}{#1}{#2}}
\newtheorem*{problem}{Problem}
\newtheorem{theorem}{Theorem}[section]
\newtheorem{lemma}[theorem]{Lemma}
\newtheorem{definition}[theorem]{Definition}
\begin{document} 
\title{Higher weight spectra of ternary codes associated to the quadratic Veronese $3$-fold}
\author{Krishna Kaipa}
\author{Puspendu Pradhan }
\address{Department of Mathematics, Indian Institute of Science Education and Research, Pune, Maharashtra, 411008 India.}
\email{kaipa@iiserpune.ac.in, puspendupradhan1@gmail.com}

\subjclass{94B27, 51E20, 05B25} 
\keywords{Quadratic Veronese varieties, Veronese code, Linear system of quadrics, Generalized weight enumerator polynomial, Extended weight enumerator polynomial}


\begin{abstract} 
The problem studied in this work is to determine the higher weight spectra of the Projective Reed-Muller codes associated to the Veronese $3$-fold $\mV$ in $PG(9,q)$, which is the image of the quadratic Veronese embedding of $PG(3,q)$ in $PG(9,q)$. We reduce the problem to the following combinatorial problem in finite geometry:   For each subset $S$ of  $\mathcal V$, determine the dimension of the linear subspace of $PG(9,q)$ generated  by $S$.
We develop a systematic method to solve the latter problem. We implement the method  for $q=3$, and use it to obtain the higher weight spectra of the associated code. The case of a general finite field $\bF_q$ will be treated in a future work. 
\end{abstract}

\maketitle

\section{Introduction} \label{intro}
Let $PG(n,q)$ denote the $n$-dimensional projective space defined over a finite field $\bF_q$.  Let $K =(n^2+3n+2)/2$. The  quadratic Veronese $n$-fold $\mV_n$   in $PG(K-1,q)$ is the image of the 
quadratic embedding \[ \imath : PG(n,q) \hookrightarrow PG(K-1,q), \]
 defined with respect to coordinates $(x_0, \dots, x_n)$ on $PG(n,q)$ and $(z_0, \dots, z_{K-1})$ on $PG(K-1,q)$ by  
\[\imath(x_0,\dots,x_n)=(x_0^2,x_0x_1\dots, x_{n-1}x_n,x_n^2),\]
for some fixed ordering of the $K$ monomials of degree $2$ in the variables $x_0, \dots,x_n$.
Let   $P_1, \dots, P_N$  where $N = 1+q+\dots+q^n$, be a fixed ordering of the points of $PG(n,q)$ and let 
$\bG_n$ denote the matrix of size $K \times N$ defined by 
 \[ \bG_n=\bbm v_1&\dots&v_N \bem, \]
where $v_j\in \bF_q^{K}$ represents the point $\imath(P_j)$ of $PG(K-1,q)$. The linear code  of length $N$ and dimension $K$ generated by the matrix $\bG_n$ is the quadratic Veronese code $C_n$, or the second order Projective Reed Muller code of length $N$ (\cite[\S 5]{LG}).  A  codeword  of the code $C_n$  is  a  quadratic form $f$  on $PG(n,q)$  and   the weight  of a codeword is  the number of points of $PG(n,q)$ not lying on the corresponding quadric hypersurface defined by the vanishing of $f$. An $r$-dimensional subcode of $C_n$ is a linear subspace of $C_n$ generated by some $r$ linearly independent quadratic forms $f_1, \dots, f_r$ on $PG(n,q)$. The weight of this subcode is the number of points of $PG(n,q)$ which do not lie on the common zero set of $f_1, \dots, f_r$. Equivalently, $r$-dimensional subcodes of $C_n$ correspond to codimension $r$ linear subspaces $\mL$ of $PG(K-1,q)$, with the weight of the subcode being the number of points of $\mV_n$ not lying on $\mL$.  The $r$-th generalized weight enumerator (in short GWE) of $C_n$ is the polynomial
\[ W^{(r)}(Z) = \sum_{\mL \subset PG(K-1,q) \text{ of codim  $r$}}  Z^{\# \mV_n\setminus \mL}=\sum_i A_i^{(r)} Z^i,\]
where $A_i^{(r)}$ is the number of codimension $r$ linear subspaces of $PG(K-1,q)$ which intersect $\mV_n$ in 
$N-i$ points. \\

We also explain the  algebraic-geometric significance of the polynomial $Z^NW^{(r)}(\frac{1}{Z})$. It gives the answer to the following geometric problem:
\begin{problem}
Given  natural numbers $n$ and $r$, 
let $\mS^{(r)}_n$ denote the collection of 
varieties in $\bP^n$ defined by $r$ linearly independent quadratic forms $f_1(X_0, \dots, X_n), \dots,  f_r(X_0, \dots, X_n)$  over $\bF_q$.  For $V \in \mS^{(r)}_n$, let $\#V(\bF_q)$ denote the number of $\bF_q$-rational points of $V$. Determine the generating function 
\[ \sum_{V \in \mS^{(r)}_n} Z^{\#V(\bF_q)} = \sum_{i \geq 0} c_i^{(r)}(n) Z^i,\]
where $c_i^{(r)}(n)$ is  the number of varieties in $\mS^{(r)}_n$ having $i$ rational points over $\bF_q$.
\end{problem}
The numbers $c_i^{(r)}(n)$ are just the numbers $A^{(r)}_{N-i}(C_n)$. \\

The first GWE of $C_n$ is easily calculated (see Theorem \ref{WEn}).
However, for $r>1$, the $r$-th GWEs of $C_n$ are hard to determine. In the work \cite{JV2}, Johnsen and Verdure determined the $r$-GWE's of the code $C_2$ for all $r$ and all finite fields $\bF_q$. In the work \cite{JV3}, they further obtained the $r$-th GWEs of the code $C_3$ for all $r$ over the binary field $\bF_2$. In this work, we  develop a method to determine all the $r$-GWE's of the code $C_3$ over a general finite field $\bF_q$. We now explain the gist of our method: Given a linear code $C$ of dimension $K$ with a generator matrix $M$, let $B_{j,i}$ denote the number of $K \times j$ submatrices of $M$  that have rank $i$. As shown by Jurrius-Pellikaan  \cite{JP}, the knowledge of these quantities $B_{j,i}$ for all $(j,i)$ suffices to determine the $r$-GWE's of the code $C$ for all $r$.
For the code $C_3$, the quantities $B_{j,i}$ are the number of configurations of $j$ points in $PG(3,q)$ whose images in $\mV_3$ span a $(i-1)$-dimensional linear subspace in $PG(9,q)$. We treat this problem from a finite geometry perspective, and develop a systematic method to calculate the quantities $B_{j,i}$ for all $(j,i)$, and hence  obtain the  $r$-th GWE of $C_3$ for all $r$ and for all fields $ \bF_q$.  
 We execute the method for $q=3$ in the present work, and the case of general $q$ will be treated in a future work. 
 
 \begin{theorem} \label{main_theorem} For $1 \leq r \leq 10$, let $W^{r}(Z)$ denote the $r$-th GWE of the code $C_3$ over the ternary field $\bF_3$. We have:
\begin{align*}
W^{(1)}(Z)=&2 Z^{18}(195 Z^{18} + 4212 Z^{12} + 4700 Z^9 + 5265 Z^6 + 390)\\
\begin{split}
     W^{(2)}(Z)  ={}& 13 Z^{24}(67797 Z^{16} + 243000 Z^{15} + 597780 Z^{14}
 + 933120 Z^{13}+ 1151610 Z^{12}
+ 991440 Z^{11}\\
 &+ 816480 Z^{10} + 428400 Z^9 + 182250 Z^8 + 116640 Z^7 + 43200 Z^6 + 14580 Z^4 + 760 Z^3 + 360).
\end{split}\\
\begin{split}
    W^{(3)}(Z)=
     {}&  10Z^{26}   (408114369 Z^{14} + 615347208 Z^{13} + 471425994 Z^{12} + 230935536 Z^{11} + 79511991 Z^{10}\\
         &  + 20839572 Z^9 + 5231304 Z^8 + 968760 Z^7 + 237978 Z^6 + 50544 Z^5 + 9360 Z^4 + 52 Z + 108).
\end{split}\\
 \begin{split}
    W^{(4)}(Z)={}& Z^{27}( 304888261329 Z^{13} + 152054622720 Z^{12} + 37402159860 Z^{11} + 5713746480 Z^{10} \\
&+ 641358900 Z^9 + 69893928 Z^8 + 7076160 Z^7 + 627120 Z^6 + 89505 Z^5 + 40). \end{split} \\
\begin{split}
W^{(5)}(Z)={}& 26  Z^{33}(
49159423665 Z^7 + 8099569140 Z^6 + 649260360 Z^5 + 31541400 Z^4 \\
&+ 1366565 Z^3 + 73548 Z^2 + 2430 Z + 120).  \end{split}\\
W^{(6)}(Z)=& 2 Z^{35}(237163797936 Z^5 + 12886036860 Z^4 + 334019790 Z^3 + 4956120 Z^2 + 104975 Z + 2340). \\
W^{(7)}(Z)= &10 Z^{36}(1800649161 Z^4 + 31769712 Z^3 + 252954 Z^2 + 936 Z + 13). \\
W^{(8)}(Z)= &13 Z^{38} (5557197 Z^2 + 30160 Z + 60).\\
W^{(9)}(Z)=&4 Z^{39}
(7371 Z + 10).\\
 W^{(10)}(Z) =& Z^{40}. 
\end{align*}
In particular,  the generalized Hamming weights of the code $C_3$ over $\bF_3$ are given by 
\[ d_1=18, \, d_2=24,\,d_3=26, \, d_4=27,\, d_5=33, \, d_6=35, \, d_7=36,\, d_8=38,\, d_9=39,\, d_{10}=40.\]
\end{theorem}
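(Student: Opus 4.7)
The plan is to reduce the determination of the polynomials $W^{(r)}(Z)$ to a counting problem in $PG(3,3)$, and then to carry out the count by a systematic stratification of point configurations. Write $W_2$ for the $10$-dimensional $\bF_3$-space of quadratic forms in $x_0,\dots,x_3$, so that $PG(9,3)=\bP(W_2^*)$ and the Veronese embedding sends $P\in PG(3,3)$ to the evaluation functional $\mathrm{ev}_P\colon f\mapsto f(P)$. For a subset $S\subset PG(3,3)$, the projective span $\langle\imath(S)\rangle$ has dimension $i-1$ exactly when the annihilator $\mathrm{Ann}(S):=\{f\in W_2:f|_S=0\}$ has dimension $10-i$. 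Setting $B_{j,i}:=\#\{S\subset PG(3,3):|S|=j,\ \dim\langle\imath(S)\rangle=i-1\}$, the coefficients $A^{(r)}_k$ in $W^{(r)}(Z)=\sum_k A^{(r)}_k Z^k$ satisfy the double-count
\[ \sum_{i=0}^{10} B_{j,i}\binom{10-i}{r}_3 \;=\; \sum_{k}A^{(r)}_k\binom{40-k}{j},\qquad 0\le j\le 40,\]
obtained by enumerating pairs $(S,\mL)$ with $|S|=j$ and $\mL\subset PG(9,3)$ a codimension-$r$ subspace containing $\langle\imath(S)\rangle$. The Pascal-type matrix $\binom{40-k}{j}$ on the right is invertible by standard Möbius inversion \cite{JP}, so once the $B_{j,i}$ are known the $A^{(r)}_k$, and hence the $W^{(r)}(Z)$, are determined.

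Accordingly the central task is to determine the table $(B_{j,i})$. I would compute it from the dual side: for a codimension-$i$ subspace $\Pi\subset W_2$ define the base locus $B(\Pi)=\{P\in PG(3,3):f(P)=0\ \forall f\in\Pi\}$, let $\beta_{m,i}$ be the number of such $\Pi$ with $|B(\Pi)|=m$, and use the double count
\[\sum_{m}\beta_{m,i}\binom{m}{j}\;=\;\sum_{s\le i} B_{j,s}\binom{10-s}{i-s}_3.\]
Since $\binom{10-s}{i-s}_3=1$ for $s=i$ and vanishes for $s>i$, this identity is unitriangular in $i$ for each fixed $j$, so the $\beta_{m,i}$ determine the $B_{j,i}$ directly. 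What remains is then the genuinely geometric task: classify the $PGL(4,3)$-orbits of codimension-$i$ linear systems of quadrics in $PG(3,3)$, and tabulate the base-locus size and stabilizer order in each orbit.

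I would build this orbit list by induction on $i$. For $i=1$, a single nonzero quadric falls into one of the classical orbits --- double plane, pair of planes, rank-$3$ cone, hyperbolic or elliptic nonsingular quadric --- with point counts computed directly. For $i\ge 2$, representatives of $(i{-}1)$-orbits are extended by adjoining one further quadric, with orbit representatives on the complement of the existing subspace enumerated under the action of the known stabilizer, and the base locus updated by intersection. Orbit sizes follow as $|PGL(4,3)|/|\mathrm{Stab}|$, and after summing over all orbits of a given codimension the total must match the Gaussian coefficient $\binom{10}{10-i}_3$, which provides a strong internal consistency check at every stage. Once all $\beta_{m,i}$ are tabulated, the two inversions above yield first the $B_{j,i}$ and then the $A^{(r)}_k$.

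The main obstacle is combinatorial complexity at intermediate codimensions $i\in\{3,4,5,6\}$, where nets and webs of quadrics in $PG(3,3)$ admit numerous projective types; over $\bF_3$ the classification splits into further arithmetic subtypes according to the $\bF_3$-rationality of the components of the base locus (isolated points, lines, conics, twisted cubics, skew or incident line pairs, and reducible configurations). Computer-assisted enumeration --- in a system such as \textsf{Magma} or \textsf{GAP} --- is indispensable for producing the orbit list, certifying stabilizer orders, and verifying the Gaussian-coefficient consistency checks. Once the table of $A^{(r)}_k$ is assembled, the polynomials stated in the theorem are read off directly, and the generalized Hamming weights $d_r$ emerge as the smallest exponent of $Z$ with nonzero coefficient in each $W^{(r)}(Z)$.
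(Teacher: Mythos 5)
Your reduction of the theorem to the table $(B_{j,i})$ is sound, and both of your double-counting identities are correct: the first is equivalent to the Jurrius--Pellikaan relations \eqref{eq:Bj}--\eqref{eq:AB1} and Theorem \ref{CO} that the paper uses, and the second (relating the $B_{j,i}$ to the base-locus statistics $\beta_{m,i}$ of codimension-$i$ linear systems) is a valid unitriangular change of variables. So the framework would work if executed. The genuine gap is in the execution of the geometric step, which is where all the content of the theorem lives. You propose to compute the $\beta_{m,i}$ by classifying \emph{all} $PGL_4(3)$-orbits of codimension-$i$ subspaces of the $10$-dimensional space of quadrics. For $i=5$ there are $\stirling{10}{5}_3$ such subspaces, on the order of $3^{25}$, so dividing by $|PGL_4(3)|\approx 1.2\times 10^7$ gives tens of thousands of orbits at that single codimension; this classification is not feasible by hand, is not actually carried out in your proposal, and consequently none of the stated coefficients are derived. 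A proof of this theorem must produce the numbers.

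The paper's key idea, which your plan is missing, is that one never needs the full orbit structure of linear systems: since $B(\Pi)=V(\Pi)$ always equals $V(I_2(V(\Pi)))$, the only linear systems whose base loci matter are the \emph{saturated} ones $I_2(S)$, i.e.\ one only has to classify the maximal rank-$r$ configurations $\mC_r$ (Lemmas \ref{maxl_uniq} and \ref{max_lem}), of which there are between one and nine classes for each $r$. These are built recursively by adjoining points to maximal rank-$(r-1)$ configurations, and the counts $B_{j,r}$ then follow from the inclusion--exclusion recursion \eqref{eq:B_j_r_alpha} over the maximal sub-configurations of each class. If you want to salvage your dual-side route, you would need either this same reduction to saturated systems (at which point you have reproduced the paper's method on the other side of the duality) or a genuine computer enumeration with the output exhibited; as written, the proposal defers the entire substantive computation and therefore does not establish the displayed polynomials or the weights $d_1,\dots,d_{10}$.
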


The rest of this paper is organized as follows. 
In \S \ref{OWE}, we record the ordinary weight enumerator of the code $C_n$. In \S \ref{Approach}, we explain how the quantities $B_{j,i}$ determine the $r$-GWE's of a code $C$, and we develop the geometric method to determine $B_{j,i}$'s for the code $C_3$. In \S \ref{r_leq_5}, we determine $B_{j,1}, B_{j,2}, B_{j,3}, B_{j,4}$ and $B_{j,5}$ for all $j$ and for a general finite field $\bF_q$.
In \S \ref{r6}, \S \ref{r7}, \S \ref{r8} and  \S \ref{r9}, we determine the quantities $B_{j,6}, B_{j,7}, B_{j,8}$ and $B_{j,9}$ respectively for all $j$ and $q=3$.
Finally, in \S \ref{BTAT} we prove Theorem \ref{main_theorem}. 

\section{Ordinary weight enumerator of the codes $C_n$} \label{OWE}
 We will write down the first generalized weight enumerator  $W^{(1)}_{C_n}(Z)$ of $C_n$. The ordinary weight enumerator of $C_n$ is just $1+(q-1) W^{(1)}_{C_n}(Z)$.
We need some notation. Let $GL_n(q)$ denote  the general linear group of invertible $n \times n$ matrices over $\bF_q$, 
The corresponding projective general linear group will be denoted $PGL_n(q)$. Let  
\beq \label{eq:gbin}
\Stirling{r}{j}_q
=\frac{\prod_{i=0}^{j-1} (q^r-q^i)}{|GL_j(q)|} = \prod_{i=0}^{j-1} \frac{(q^r-1)(q^{r-1}-1)\dots(q^{r-j+1}-1)}{(q^j-1)(q^{j-1}-1)\dots(q-1)},\eeq
denote the Gaussian binomial coefficient, which is the number of $j$-dimensional subspaces in $\bF_q^r$. Let $f_0(X,Y)$ denote a fixed quadratic form in the variables $X, Y$ which is irreducible over $\bF_q$.
We recall from \cite[Chapter 1]{GGG} that the non-degenerate quadratic forms on  $PG(2t,q)$ form a single  orbit with representative $X_0X_1 +X_2X_3+ \dots + X_{2t-2} X_{2t-1} +X_{2t}^2$.  The corresponding quadrics in $PG(2t,q)$ are known as parabolic quadrics, and  have $(q^{2t}-1)/(q-1)$ points. The size of this orbit is $q^{t^2+t} \prod_{i=1}^{t}(q^{2i+1}-1)$. The non-degenerate quadratic forms on  $PG(2t-1,q)$ form  two orbits  represented by 
$X_0X_1 +X_2X_3+ \dots +X_{2t-2} X_{2t-1}$ and $X_0X_1 +X_2X_3+ \dots X_{2t-4} X_{2t-3} +f(X_{2t-2},X_{2t-1})$. 
These orbits have sizes $\tfrac{1}{2} q^{t^2}(q^t+1) \prod_{i=1}^{t-1}(q^{2i+1}-1)$ and  $ \tfrac{1}{2} q^{t^2}(q^t-1) \prod_{i=1}^{t-1}(q^{2i+1}-1)$ respectively. The corresponding quadrics in $PG(2t-1,q)$ are known as hyperbolic and elliptic quadrics respectively, and they have  $(q^{t-1} + 1)(q^t - 1)/(q - 1)$ and $(q^{t-1} - 1)(q^t + 1)/(q - 1)$ points respectively.
Given  $m <n$ and a quadratic form $f(X_0, \dots, X_m)$ such that the corresponding quadric in $PG(m,q)$ has $\nu$ points, then the quadric defined by $f$ in $PG(n,q)$ has 
$1+ q + \dots+ q^{n-m-1} + \nu q^{n-m} $ points. Thus, the weight of  $f$ viewed as a codeword of $C_n$ is 
$q^{n-m}(1+q+ \dots+q^m- \nu)$. Since there are $\stirling{n+1}{m+1}_q$ ways to pick a $m$-dimensional linear subspace of $PG(n,q)$ we conclude that the values that weights of codewords of $C_n$ can take are as listed below:
\begin{enumerate}
\item $q^n + q^{n-t}$, where $1 \leq t \leq \lfloor (n+1)/2 \rfloor$. The number of codewords with this weight are
\[ A_{q^n + q^{n-t}}^{(1)} = \tfrac{1}{2} \stirling{n+1}{2t}_q \, q^{t^2}(q^t-1) \prod_{i=1}^{t-1}(q^{2i+1}-1).\]
\item   $q^n$,  and the number of codewords with this weight are 
\[A_{q^n }^{(1)} = \sum_{t=0}^{\lfloor n/2 \rfloor} \stirling{n+1}{2t+1}_q \,q^{t^2+t} \prod_{i=1}^{t}(q^{2i+1}-1).\]
\item $q^n - q^{n-t}$,  where $1 \leq t \leq \lfloor (n+1)/2 \rfloor$. The number of codewords with this weight are
\[ A_{q^n - q^{n-t}}^{(1)} = \tfrac{1}{2} \stirling{n+1}{2t}_q \,q^{t^2}(q^t+1) \prod_{i=1}^{t-1}(q^{2i+1}-1).\]
\end{enumerate} 
We summarize this:
\begin{theorem} \label{WEn}  The ordinary weight enumerator of the code $C_n$ is given by $1+ (q-1)W^{(1)}(C_n)$ where:
    \beq \label{eq:Cn} W^{(1)}_{C_n}(Z) = A_{q^n}^{(1)} \, Z^{q^n} + \sum_{t=1}^{\lfloor (n+1)/2 \rfloor} \left( A_{q^n - q^{n-t}}^{(1)}\,  Z^{q^n - q^{n-t}} + A_{q^n + q^{n-t}}^{(1)} \, Z^{q^n + q^{n-t}} \right),\eeq
    and the quantities $A^{(1)}_j$ are as above.
\end{theorem}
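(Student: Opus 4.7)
The plan is to decompose the enumeration according to the rank of the underlying quadratic form and then invoke the classification of nondegenerate quadrics over $\bF_q$. Since a nonzero scalar multiple of a quadratic form $f$ defines the same quadric in $PG(n,q)$, all $q-1$ nonzero scalar multiples have the same weight as codewords; hence the ordinary weight enumerator is $1+(q-1)W^{(1)}_{C_n}(Z)$, and it is enough to count projective classes $[f]$ of nonzero quadratic forms on $\bF_q^{n+1}$ by weight.

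To this end, I would use the rank stratification. Every nonzero form $f$ of rank $r=m+1$ has a well-defined radical $R\subset\bF_q^{n+1}$ of dimension $n-m$, and $f$ factors through the quotient $\bF_q^{n+1}/R$ as a nondegenerate form $\bar f$ on $\bF_q^{m+1}$. This yields a bijection between projective classes $[f]$ of rank $m+1$ and pairs $(R,[\bar f])$, where $R$ ranges over the $\stirling{n+1}{m+1}_q$ subspaces of $\bF_q^{n+1}$ of codimension $m+1$ and $[\bar f]$ over projective classes of nondegenerate forms on the quotient.

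The geometric heart of the proof is the cone formula for the weight. The quadric $Z(f)\subset PG(n,q)$ is the join of the vertex $\bP(R)\cong PG(n-m-1,q)$ with the nondegenerate quadric $Z(\bar f)\subset PG(m,q)$. Stratifying $PG(n,q)$ as $\bP(R)$ together with its complement, which fibers over $PG(m,q)$ via the linear projection away from the vertex with fibers of size $q^{n-m}$, one reads off
\[ |Z(f)| = \frac{q^{n-m}-1}{q-1} + \nu\, q^{n-m}, \qquad \nu := |Z(\bar f)|, \]
so that $\mathrm{wt}(f) = q^{n-m}\bigl(|PG(m,q)| - \nu\bigr)$.

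Plugging in the three classical point counts for nondegenerate quadrics over $\bF_q$ (parabolic, hyperbolic, elliptic) recalled in the section produces exactly the three weight levels $q^n$, $q^n-q^{n-t}$, $q^n+q^{n-t}$. Multiplying the corresponding orbit sizes (also recalled above) by the Gaussian binomial $\stirling{n+1}{m+1}_q$ counting radicals, and collecting terms, then yields \eqref{eq:Cn}. The proof is essentially bookkeeping; the one subtlety worth flagging is the matching of orbit type to weight in the two-orbit even-dimensional case, where hyperbolic quadrics (carrying more rational points) correspond to the smaller weight $q^n - q^{n-t}$ while elliptic quadrics correspond to the larger weight $q^n + q^{n-t}$, as verified by direct comparison of $(q^{t-1}+1)(q^t-1)/(q-1)$ with $(q^{t-1}-1)(q^t+1)/(q-1)$.
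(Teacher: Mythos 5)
Your proposal is correct and follows essentially the same route as the paper: stratify nonzero quadratic forms by rank, reduce to a nondegenerate form on a quotient, apply the cone point-count $\frac{q^{n-m}-1}{q-1}+\nu q^{n-m}$, and multiply the classical orbit sizes of nondegenerate quadrics by the Gaussian binomial $\stirling{n+1}{m+1}_q$. Your parametrization by the radical $R$ is a slightly cleaner way to set up the bijection than the paper's ``pick an $m$-dimensional subspace,'' but the two counts coincide by the symmetry $\stirling{n+1}{n-m}_q=\stirling{n+1}{m+1}_q$, so the arguments are the same in substance.
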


In particular, for $n=3$ we have $6$ orbits of quadratic forms on $PG(3,q)$, which we record in Table \ref{tab:table1}, including their sizes, and weights when regarded as codewords of the code $C_n$.
\begin{table}
    \centering
    \begin{tabular}{|c|c|c|c|} \hline
     \textbf{Orbit} & \textbf{Representative} &  \textbf{Size}  & \textbf{weight} \\ [0.5ex] \hline 
     &&&\\ 
     $\mO_1$ &$X_0X_1$ & $\tbinom{q^3+q^2+q+1}{2}$  & $q^3-q^2$ \\
&&&\\
$\mO_2$ &$X_0X_3-X_1X_2$ & $ \tfrac{1}{2} (q^6+q^4)(q^3-1)$& 
 $q^3-q$ \\ &&&\\
$\mO_3$&$X_0^2$ & $(1+q)(1+q^2)$ & $q^3$ \\  &&&\\
$\mO_4$ &$X_1^2-X_0X_2$ & $(1+q)(1+q^2)(q^{5}-q^2)$ & $q^3$ \\ &&&\\
$\mO_5$& $X_2X_3-f_0(X_0,X_1)$ & $\tfrac{1}{2} (q^6-q^4)(q^{3}-1)$ & $ q^3+q$\\ &&&\\
$\mO_6$&$f_0(X_0,X_1)$ & $\tfrac{1}{2} (q^3-1)(q^3+q)$ & $q^3+q^2$  \\ &&&\\
\hline
    \end{tabular}
    \caption{List of $G$-orbits of quadratic forms on $PG(3,q)$}
    \label{tab:table1}
\end{table}
Specializing \eqref{eq:Cn} to $n=3$,  we get:
 \begin{multline}
      W^{(1)}_{C_3}(Z) = 
  \tfrac{1}{2} (q^3+q^2+q+1)(q^3+q^2+q)\,  Z^{q^3 - q^{2}} + 
  \tfrac{1}{2} (q^6+q^4)(q^3-1)\,  Z^{q^3 - q}\\
  + (q^5-q^2+1)(q^3+q^2+q+1)  \, Z^{q^3}\\  + \tfrac{1}{2} (q^6-q^4)(q^3-1)\, Z^{q^3 + q} + \tfrac{1}{2} (q^3-1)(q^3+q)\, Z^{q^3 + q^{2}} 
 \end{multline}  
  Further specializing this to $q=3$, we get
 \[2 Z^{18}(195 Z^{18} + 4212 Z^{12} + 4700 Z^9 + 5265 Z^6 + 390),\]
 which matches with Theorem \ref{main_theorem}.

\section{Approach to the Problem}\label{Approach}
Let $C$ be a linear $[N,K,d]_q$ code over $\bF_q$ of length $N$, dimension $K$ and minimum distance $d$ with generator matrix $M_{K \times N}$. For $J \subset \{1, \dots,N\}$, let $r_J$ be the rank of the submatrix of $M_{K \times N}$ on the columns indexed by $J$. 
The higher weight spectrum of $C$ can be obtained just by knowing $r_J$ for each $J \subset \{1, \dots,N\}$.  To see this, consider for each $1 \leq j \leq N$, the polynomials $B_j(T) \in \bZ[T]$ defined by 

 \begin{align} \label{eq:Bj}
B_j(T)= \sum_J  (T^{K-r_J} -1),
\end{align}
where the sum runs over all subsets $J \subset \{1, \dots,N\}$ of size $j$. We define $B_0(T)=(T^K-1)$. The \emph{extended weight enumerator} polynomial (in short EWE) is the polynomial $W_C(Z;T) \in \bZ[Z,T]$ defined by 
 \begin{align} \label{eq:EWE}
 \nonumber W(Z;T)=& 1  + \sum_{j=0}^N    B_j(T) (1-Z)^j Z^{N-j} \\ 
 = &1  + (T^K-1) Z^N + \sum_{j=1}^N    B_j(T) (1-Z)^j Z^{N-j}. 
  \end{align}
Let $A_j(T) \in \bZ[T]$ be polynomials defined by
\begin{align} \label{eq:GWEP}
 W(Z;T)= 1 + \sum_{ j=1}^N  A_j(T)  Z^j.
\end{align}

The polynomial $W(Z;q^m)$ is the weight enumerator polynomial of the extended code $C \otimes \bF_{q^m}$ (see \cite[Proposition 5.23]{JP}). Thus,  $A_j(q^m)$ is the number of weight $j$ codewords of $C \otimes \bF_{q^m}$.
The $r$-th generalized weight enumerator polynomial of $C$ can be obtained from extended weight enumerator polynomial:

\begin{theorem}\cite[Theorem 5.7]{JP} \label{CO}
Let $C$ be a linear code over $\bF_q$ and $0\leq r\leq K$. Then the $r$-th generalized weight enumerator polynomial  of $C$ can be obtained from the  extended weight enumerator polynomial of $C$ by: 
\[W^{(r)}(Z)= \frac{1}{|GL_r(q)|}\sum_{j=0}^{r}{r \brack j}_q(-1)^{r-j}q^{\tbinom{r-j}{2}}W(Z;q^j).\]
\end{theorem}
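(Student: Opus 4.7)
The plan is to derive a polynomial identity expressing $W(Z;T)$ as a $\bZ[T]$-linear combination of the generalized weight enumerators $W^{(r)}(Z)$, and then invert it at the specializations $T=q^0,q^1,\dots,q^r$ via the $q$-analogue of M\"obius inversion. The two ingredients needed are a combinatorial lemma relating codewords of $C\otimes\bF_{q^m}$ to $\bF_q$-linear subcodes of $C$, and the classical $q$-binomial inversion formula.

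For the combinatorial lemma, fix a basis $\alpha_1,\dots,\alpha_m$ of $\bF_{q^m}$ over $\bF_q$. Each $c\in C\otimes\bF_{q^m}$ expands uniquely as $c=\sum_{i=1}^{m}\alpha_i c^{(i)}$ with $c^{(i)}\in C$, and since the $\alpha_i$ form an $\bF_q$-basis, the support of $c$ coincides with that of the $\bF_q$-subcode $D=\langle c^{(1)},\dots,c^{(m)}\rangle\subseteq C$. The number of ordered $m$-tuples in $C^m$ spanning a fixed $r$-dimensional $D$ equals the number of full-column-rank $m\times r$ matrices over $\bF_q$, namely $\prod_{i=0}^{r-1}(q^m-q^i)$. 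Grouping codewords of $C\otimes\bF_{q^m}$ by the subcode they generate therefore yields
\[ A_w(q^m)=\sum_{r=0}^{K}\prod_{i=0}^{r-1}(q^m-q^i)\,A_w^{(r)}(C), \qquad m\geq 1, \]
where $A_w^{(r)}(C)$ counts $r$-dimensional subcodes of $C$ with support of size $w$. Both sides are polynomials in $T:=q^m$ agreeing on an infinite set, so the identity lifts to $\bZ[Z,T]$, and summing over $w$ produces
\[ W(Z;T)=\sum_{r=0}^{K}\prod_{i=0}^{r-1}(T-q^i)\,W^{(r)}(Z). \]

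Specializing at $T=q^j$, the product $\prod_{i=0}^{r-1}(q^j-q^i)$ vanishes for $r>j$ and equals $|GL_r(q)|\,{j\brack r}_q$ otherwise, by \eqref{eq:gbin}, so the identity becomes the lower-triangular system
\[ W(Z;q^j)=\sum_{r=0}^{j}|GL_r(q)|\,{j\brack r}_q\,W^{(r)}(Z), \qquad 0\leq j\leq K. \]
Setting $g_r=|GL_r(q)|W^{(r)}(Z)$ and $f_j=W(Z;q^j)$, this is the standard form $f_j=\sum_{r=0}^{j}{j\brack r}_q g_r$; the $q$-binomial M\"obius inversion formula $g_r=\sum_{j=0}^{r}(-1)^{r-j}q^{\binom{r-j}{2}}{r\brack j}_q f_j$ then yields the claimed expression. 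The substantive step is the combinatorial lemma: one must verify that the support of an extended codeword equals the support of the $\bF_q$-subcode of $C$ spanned by its components in any basis of $\bF_{q^m}/\bF_q$, and that the spanning-tuple count depends only on the dimension $r$. The $q$-M\"obius inversion at the end is a classical identity, so once the polynomial identity is in place, the formula follows by routine algebra.
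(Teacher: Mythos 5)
Your argument is correct and is essentially the standard proof of this result from Jurrius--Pellikaan: the paper itself offers no proof, simply citing \cite[Theorem 5.7]{JP}, and your derivation (grouping codewords of $C\otimes\bF_{q^m}$ by the $\bF_q$-subcode spanned by their coordinate components, obtaining $W(Z;T)=\sum_{r}\prod_{i=0}^{r-1}(T-q^i)\,W^{(r)}(Z)$, and inverting the resulting triangular system by $q$-binomial inversion) is precisely the route taken in that reference. The two key verifications — that the support of an extended codeword equals the support of the subcode generated by its components, and that the spanning-tuple count $\prod_{i=0}^{r-1}(q^m-q^i)$ matches $|GL_r(q)|\stirling{j}{r}_q$ at $T=q^j$ via \eqref{eq:gbin} — are both handled correctly.
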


  From \eqref{eq:Bj}, \eqref{eq:EWE} and Theorem \ref{CO}, it follows that determining the quantities $r_J$ for all $J \subset \{1,\dots,N\}$ allows us to obtain the $r$-th generalized weight enumerator  polynomial of $C$.
It will be convenient to introduce the numbers $B_{j,i}$ defined to be the number of $J \subset \{1, \dots, N\}$ such that $|J|=j$ and rank$(G_J)=i$. In terms of $B_{j,i}$ we can write
\[  B_j(T) = \sum_{i \geq 1} B_{j,i}( T^{K-i}-1). \]
We note that  in the polynomial $B_j(T)$ the quantities $B_{j,K}$ are multiplied by $(T^{K-K}-1)=0$. Therefore, there is no need to determine the quantities $B_{j,K}$. In other words, we may re-define
\beq \label{eq:Bji}  B_j(T) = \sum_{i=1}^{K-1} B_{j,i}( T^{K-i}-1). \eeq

By  \cite[Theorem 5.4]{JP}, we have     $A_0(T)=1$, and for $j >0$:
  \beq \label{eq:AB1}  A_j(T) = \sum_{i=N-j}^{N-d} (-1)^{N+j+i}  \textstyle \tbinom{i}{N-j}  B_i(T).  \eeq

It will be useful to note that :
\beq \label{eq:Bjj} 
B_{j,j}= \textstyle \tbinom{N}{j} - \sum_{i<j} B_{j,i}, \quad \text{ if $j \leq K$}. \eeq

Although, in the expression \eqref{eq:Bji} the quantities $B_{j,K}$ were not required, for later use we note that
\beq  \label{eq:BjK} B_{j,K}= \textstyle \tbinom{N}{j} - \sum_{i<K} B_{j,i}, \quad \text{ if $j > K$.} \eeq

 We now return to the code $C_3$. Here the Veronese embedding $\imath:PG(3,q) \to PG(9,q)$ is given by:
\beq \imath(x_0, \dots, x_3)= (x_0^2, x_0 x_1, x_1^2, x_0 x_2, x_1x_2,x_2^2, x_0x_3,x_1 x_3,x_2 x_3, x_3^2).\eeq
The generator matrix $\bG_3$ of $C_3$ will be denoted as just $\bG$. We need  some more notation and definitions. \begin{enumerate}
\item [--] A \emph{configuration} $S$ is just a set of points of $PG(3,q)$. 
\item [--] The group  $PGL_4(q)$ will be denoted as $G$. 
\item [--] For a homogeneous ideal $I$ of $\bF_q[X_0,X_1,X_2,X_3]$ generated by quadratic forms $f_1, \dots, f_r$, let $V(I)$ denote the intersection of the quadrics $f_1= \dots= f_r=0$ in $PG(3,q)$.\\  For $S \subset PG(3,q)$, let $I_2(S)$ denote the vector space of quadratic forms $f(X_0,X_1,X_2,X_3)$ vanishing on $S$.
\item [--] The \emph{rank} of a  configuration $S$  is defined by any one of the following equivalent numbers: \begin{enumerate} 
\item one more than the dimension of the linear subspace of $PG(3,q)$ spanned by $\imath(S)$. 
\item  the rank of the submatrix of $\bG$ on the columns representing the points of $\imath(S)$. 
\item $10 -\text{dim}(I_2(S))$
\end{enumerate}
\end{enumerate}

\begin{definition} A configuration $S \subset PG(3,q)$ of rank $r$ is a maximal rank $r$ configuration -- in short a $\mC_r$-- if there is no rank $r$ configuration properly containing $S$. 
Equivalently, $S$ is  a $\mC_r$ if  $V(I_2(S))=S$ and $\text{dim}(I_2(S))=10-r$.
\end{definition}
For example, the line $\mL$ given by $x_2=x_3=0$ has rank $3$ because $\imath(\mL)$ is the conic $z_1^2=z_0z_2$ in the plane 
$z_3=\dots=z_9=0$ and the span of this conic is the whole plane, or in other words the vector space $I_2(\mL)$ has basis $X_3^2,X_3X_2,X_3X_1,X_3X_0,X_2^2,X_2X_1,X_2X_0$.
Clearly $V(I_2(\mL))$ is the points of intersection of the hyperplanes $X_3=0$ and $X_2=0$  which is  the line $\mL$. Thus,  $\mL$ is a $\mC_3$. 
Similarly, the plane $\mH$  given by $x_3=0$,  has rank $6$ because $\imath(\mH)$ is a  Veronese surface in the $5$-dimensional linear subspace  $PG(5,q) \subset PG(9,q)$  given by $z_6=\dots=z_9=0$, and the linear span of this surface is  $PG(5,q)$. Here $I_2(\mH)$ has basis $X_3^2,X_3X_2,X_3X_1,X_3X_0$ and clearly $V(I_2(\mH))$ is just the hyperplane $X_3=0$ which is $\mH$. Thus, $\mH$ is a $\mC_6$. 

 We note that there is a monomorphism $\rho: G \hookrightarrow PGL_{10}(q)$ (see \cite[Theorem 4.15]{GGG}) such that for each $g \in G $ and $v \in PG(3,q)$ we have 
\[ \imath(g \cdot v) = \rho(g) \imath(v), \]
and the image  $\mG(\mV_3) \subset PGL_{10}(q)$ of this homomorphism is  the automorphism group of $\mV_3$ in $PGL_{10}(q)$.  Therefore, given a line $\mL$ (resp.  plane $\mH$) of $PG(3,q)$ we may assume that up to $G$-equivalence, $\mL$ is given by $x_2=x_3=0$ (resp. $\mH$ is given by $x_3=0$).  Thus, each  line of $PG(3,q)$ is a $\mC_3$ and each  plane of $PG(3,q)$ is a $\mC_6$.\\

\begin{lemma} \label{maxl_uniq} Given a rank $r$ configuration $S$, there is a unique maximal rank $r$ configuration $\bar S \supset S$.  We have $\bar S = V(I_2(S))$. \end{lemma}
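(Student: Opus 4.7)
The plan is to establish Lemma \ref{maxl_uniq} by a Galois-connection style argument between configurations $S \subset PG(3,q)$ and linear subspaces of the $10$-dimensional space of quadratic forms on $PG(3,q)$. The two operations in play are $S \mapsto I_2(S)$ and (for a linear subspace $W$ of quadratic forms) $W \mapsto V(W)$; both are inclusion-reversing, and the composition $S \mapsto \bar S := V(I_2(S))$ is a closure operator in the sense that it is extensive, monotone and idempotent. The rank of $S$ is defined as $10 - \dim I_2(S)$, and the notion of a $\mC_r$ is precisely that $S$ is a fixed point of this closure operator whose $I_2$ has the ``correct'' codimension.

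First I would note the elementary extensivity $S \subseteq V(I_2(S)) = \bar S$, which is immediate from the definitions. Next I would establish the key identity $I_2(\bar S) = I_2(S)$: the inclusion $I_2(\bar S) \subseteq I_2(S)$ follows from $S \subseteq \bar S$ and inclusion-reversal, while the reverse inclusion is tautological, since every $f \in I_2(S)$ vanishes by definition on $V(I_2(S)) = \bar S$. Applying the rank definition, this gives $\mathrm{rank}(\bar S) = 10 - \dim I_2(\bar S) = 10 - \dim I_2(S) = r$, so $\bar S$ has the same rank as $S$ and still contains $S$. Finally, $V(I_2(\bar S)) = V(I_2(S)) = \bar S$, so the defining condition of a $\mC_r$ holds for $\bar S$.

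For uniqueness, I would argue that if $T \supseteq S$ is any rank $r$ configuration satisfying $V(I_2(T)) = T$, then inclusion-reversal of $I_2$ gives $I_2(T) \subseteq I_2(S)$; but both have the same dimension $10 - r$, so $I_2(T) = I_2(S)$, and consequently $T = V(I_2(T)) = V(I_2(S)) = \bar S$. In particular every maximal rank $r$ configuration containing $S$ must coincide with $\bar S$, and conversely $\bar S$ itself is such a configuration by the previous paragraph.

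Conceptually there is no real obstacle here: the lemma is essentially the statement that the pair $(I_2, V)$ forms a Galois connection, and that the level sets of the rank function are preserved by the resulting closure. The only mild subtlety worth flagging in the write-up is the equality of dimensions $\dim I_2(T) = \dim I_2(S) = 10 - r$, which is where the hypothesis that $T$ also has rank $r$ (rather than some larger rank configuration containing $S$) is actually used.
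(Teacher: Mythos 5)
Your proof is correct and is essentially the paper's own argument: both establish $I_2(\bar S)=I_2(S)$ (one inclusion from $S\subseteq\bar S$, the other tautological), conclude $\bar S$ has rank $r$, and then use the dimension count $\dim I_2(T)=\dim I_2(S)=10-r$ to force any rank-$r$ configuration $T\supseteq S$ to satisfy $I_2(T)=I_2(S)$ and hence coincide with (or, in the paper's phrasing, be contained in) $V(I_2(S))$. The Galois-connection framing is just a tidier way of naming the same steps.
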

\bep Let $S' = V(I_2(S))$. We must show that i) $S'$ has rank $r$, and ii) any rank $r$ configuration $T$ containing $S$, is contained in $S'$. \\
As for i), it suffices to show $I_2(S') = I_2(S)$.   Since $S' \supset S$, we have $I_2(S') \subset I_2(S)$. On the other hand,  if $f \in I_2(S)$ then by definition of $S'$, it follows that $f$ vanishes on $S'$, and hence $f \in I_2(S')$, i.e. $I_2(S) \subset I_2(S')$.  Thus, $I_2(S')=I_2(S)$.\\
As for ii), if $T \supset S$ has rank $r$, then the  fact that $\text{rank}(T)=\text{rank}(S)=r$ implies $I_2(S)=I_2(T)$, and hence $T \subset V(I_2(T))=V(I_2(S))=S'$. 
 We define $\bar S =V(I_2(S)) $.
\eep

The next lemma allows us to build $\mC_r$'s  by adding points to each $\mC_{r-1}$.

\begin{lemma} \label{max_lem}
If $S$ is a $\mC_r$, and $T\subset S$ then $\bar T \subset S$. In particular, $S$ contains a $\mC_{r-1}$ configuration $T$.
\end{lemma}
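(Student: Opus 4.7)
The plan is to derive both assertions directly from Lemma \ref{maxl_uniq}, which tells us that for any configuration $S$ one has $\bar S = V(I_2(S))$, and that $S$ is a $\mC_r$ precisely when $V(I_2(S))=S$ and $\dim I_2(S)=10-r$. With this in hand, both statements reduce to unpacking definitions and applying the Galois-type correspondence between configurations and ideals generated by quadrics.

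For the first assertion, I would argue as follows. If $T\subset S$, then every quadratic form vanishing on $S$ automatically vanishes on $T$, so $I_2(S)\subset I_2(T)$. Taking common zero loci reverses inclusions, which gives
\[ \bar T \;=\; V(I_2(T)) \;\subset\; V(I_2(S)) \;=\; S,\]
where the final equality uses that $S$ is a $\mC_r$. That finishes the first half of the lemma.

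For the second assertion, my strategy is to exhibit a $\mC_{r-1}$ sitting inside $S$ as the maximal closure of a carefully chosen small subset. Since $S$ has rank $r$, the images $\imath(S)\subset PG(9,q)$ span an $(r-1)$-dimensional linear subspace, so I can select points $P_1,\dots,P_r\in S$ whose images $\imath(P_1),\dots,\imath(P_r)$ are linearly independent. Setting $T_0=\{P_1,\dots,P_{r-1}\}\subset S$ gives a configuration whose image spans an $(r-2)$-dimensional subspace, so $T_0$ has rank $r-1$. By the first part just proved, $\bar{T_0}\subset S$, and by Lemma \ref{maxl_uniq} the configuration $\bar{T_0}=V(I_2(T_0))$ is a maximal rank $r-1$ configuration, i.e. a $\mC_{r-1}$.

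I do not anticipate any genuine obstacle here: the whole proof is a direct application of Lemma \ref{maxl_uniq} combined with the elementary inclusion-reversing property of $I_2(\cdot)$ and $V(\cdot)$. The only point that warrants a brief sentence is the extraction of $r$ linearly independent images from $\imath(S)$, which is the standard fact that a spanning set of an $(r-1)$-dimensional projective subspace contains $r$ points in general linear position.
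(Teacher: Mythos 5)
Your proof is correct and follows essentially the same route as the paper: part one is the identical inclusion-reversal argument via $I_2(\cdot)$ and $V(\cdot)$, and part two likewise produces a rank $(r-1)$ subset of $S$ and takes its closure, invoking part one and Lemma \ref{maxl_uniq}. The only (immaterial) difference is that you extract the rank $(r-1)$ subset by choosing $r-1$ points with linearly independent images, whereas the paper deletes points of $S$ one at a time until the rank drops by one.
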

\bep  Since $T \subset S$, we get $I_2(S) \subset I_2(T)$, and hence $ \bar T = V(I_2(T)) \subset V(I_2(S))=S$.

We delete   points of $S$ one at a time until the rank drops by one. If $S'$ is  the resulting rank $(r-1)$ configuration, then  $T=\overline{S'}$ is a $\mC_{r-1}$ configuration contained in $S$. \eep

In order to determine the rank $r$ configurations of a given size $j$, it suffices to look for size $j$ and rank $r$ subsets of each $\mC_r$. There are two steps in our approach. \begin{enumerate}
    \item[Step 1 $\rangle$] Determine the set $\fC_r$ of all $\mC_r$'s. i.e., the collection of all maximal rank $r$ configurations.  We recursively  construct all $\mC_r$ configurations by adding points to $\mC_{r-1}$ configurations.  We will also obtain a decomposition,  \[ \fC_r = \cup_{\alpha \in \{a, b, c, \dots\}} \M_{r \alpha}.\]
    Each of the parts $\M_{r \alpha}$ will be  a class of maximal configurations consisting of  a union of $G$-orbits of $\mC_r$'s. We will also determine the size of each class $\M_{r \alpha}$.\\
    \item[Step 2 $\rangle$ ] 
    For each class $M_{r \alpha}$ and each integer $j >r$, we will determine 
$B_{j,r}(\M_{r \alpha})$, which is  the number of rank $r$ configurations of size $j$, contained in a configuration $T$ representing the class $\M_{r \alpha}$. \\
\end{enumerate}

From Step 1 and Step 2,  we  calculate $B_{j, r}$ for $j> r$ by:
    \beq \label{eq:B_j_r} B_{j, r} = \sum_{ \alpha \in \{a, b, c, \dots\} } | \M_{r \alpha}| \cdot B_{j,r}(\M_{r \alpha}), \qquad j>r. \eeq
    The quantity $B_{r,r}$ is determined by the identity 
    \[ B_{r,r} = \tbinom{N}{r} - \sum_{i<r} B_{r, i}. \]
We also give a systematic procedure to 
determine the quantities $B_{j,r}(\M_{r \alpha})$ recursively with respect to $r$: 
\begin{lemma}
Let $T$ be a $\M_{r \alpha}$ configuration.  For each $i <r$, let  
$\fC_i(T)$ denote the collection of all $\mC_i$ configurations that are contained in $T$, and that have at least $(r+1)$ points. We can write 
$ \fC_i(T) = \cup_{\beta \in \{a,b,c, \dots\}} \M_{i \beta}(T)$  where $\M_{i \beta}(T)$ denotes the class of configurations $\M_{i \beta}$ that are   contained in $T$ and have at least $r+1$ points. 
   We have
    \beq \label{eq:B_j_r_alpha} B_{j, r}(\M_{r \alpha}) = \tbinom{|T|}{j} - \sum_{i <r}  \sum_{ \M_{i \beta}(T) \in \fC_i(T)}    |\M_{i \beta}(T)|  \cdot  B_{j, i}(\M_{i \beta}(T)). \eeq
\end{lemma}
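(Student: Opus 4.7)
The plan is to prove the formula by partitioning the size-$j$ subsets of a representative $T \in \M_{r\alpha}$ according to their rank. Since $T$ has rank $r$, every $U \subset T$ with $|U| = j \geq r+1$ has rank in $\{1, 2, \ldots, r\}$, so
\[
\tbinom{|T|}{j} = B_{j,r}(\M_{r\alpha}) + \sum_{i<r} N_i(T),
\]
where $N_i(T)$ denotes the number of size-$j$ subsets of $T$ of rank exactly $i$. The task therefore reduces to showing that $N_i(T)$ equals the inner sum on the right-hand side of the identity in the statement.

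The key geometric input is the uniqueness of the maximal rank-$i$ extension. To each rank-$i$ subset $U \subset T$ I would attach $\bar U = V(I_2(U))$ supplied by Lemma \ref{maxl_uniq}; this is a $\mC_i$ containing $U$. Since $T$ is a $\mC_r$, Lemma \ref{max_lem} gives $\bar U \subset T$, and the bound $|\bar U| \geq |U| = j \geq r+1$ places $\bar U$ in $\fC_i(T)$. Conversely, if $M \in \fC_i(T)$ and $U \subset M$ has size $j$ and rank $i$, then $M$ is itself a maximal rank-$i$ configuration containing $U$, so the uniqueness clause of Lemma \ref{maxl_uniq} forces $\bar U = M$. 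Hence the assignment $U \mapsto \bar U$ defines a surjection from the set of rank-$i$ size-$j$ subsets of $T$ onto $\fC_i(T)$, whose fiber over $M$ is precisely the set of rank-$i$ size-$j$ subsets of $M$.

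Summing over fibers, I would next group the elements of $\fC_i(T)$ into the sub-classes $\M_{i\beta}(T)$. Within each such class every representative contains the same number $B_{j,i}(\M_{i\beta}(T))$ of rank-$i$ size-$j$ subsets, so
\[
N_i(T) = \sum_{\M_{i\beta}(T) \in \fC_i(T)} |\M_{i\beta}(T)| \cdot B_{j,i}(\M_{i\beta}(T)).
\]
Substituting this back into the first display yields the identity claimed in the lemma.

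The step I expect to require the most attention is not the chain of set-theoretic manipulations above but the bookkeeping underlying the decomposition $\fC_i(T) = \bigcup_\beta \M_{i\beta}(T)$. The equivalence cutting out the classes $\M_{i\beta}(T)$ is finer than the $G$-orbit decomposition of $\fC_i$ used globally, because fixing the ambient $T$ can split a single $G$-orbit of $\mC_i$'s into several relative classes. To make $B_{j,i}(\M_{i\beta}(T))$ intrinsic to the class one must take the classes to be orbits of the stabilizer of $T$ in $G$ (or a combinatorially equivalent refinement), and verifying that this refinement carries the required invariance is the only delicate point in the argument.
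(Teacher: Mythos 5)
Your proof is correct and follows essentially the same route as the paper: partition the size-$j$ subsets of $T$ by rank, and use Lemma \ref{maxl_uniq} together with Lemma \ref{max_lem} to fiber the rank-$i$ size-$j$ subsets of $T$ over the maximal rank-$i$ configurations $\bar U \subset T$ containing them. The only caveat is that the worry in your final paragraph is unnecessary: the count $B_{j,i}(M)$ depends only on $M$ as a point set and not on how $M$ sits inside $T$, so $\M_{i \beta}(T)$ is simply the restriction of the global class $\M_{i \beta}$ to members contained in $T$ (not a finer decomposition), and no refinement to orbits of the stabilizer of $T$ is required.
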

\bep In order to obtain $B_{j, r}(\M_{r \alpha})$, we must subtract from $\tbinom{|T|}{j}$, the number of $j$ point configurations $S\subset T$ having rank 
at most $r-1$. If $S\subset T$ is a $j$ point configuration of rank $i<r$, then by Lemma \ref{max_lem} and Lemma \ref{maxl_uniq}, $\bar S$ is  the unique $\M_{i \beta}(T)$ configuration containing $S$. Thus, the second term in the right hand  side of \eqref{eq:B_j_r_alpha} exactly counts all the $j$-point configurations in $T$ that have rank strictly less than $r$.

\eep

In   \S  \ref{r_leq_5}, we study configurations of rank at most $5$ (for a general finite field $\bF_q$), and in \S \ref{r6}, \S \ref{r7}, \S \ref{r8}  and \S \ref{r9},  we study configurations of rank $6$, $7$, $8$ and $9$ respectively over the field $\bF_3$.  Each of the sections \S \ref{r_leq_5} -- \S \ref{r9} is divided into two subsections dedicated to Step 1 and Step 2 of the above method.

\section{Configurations of rank at most $5$ for a general finite field $\bF_q$} \label{r_leq_5}

\subsection{Classification of $\mC_r$, for $r \leq 5$}\hfill \\
The only configuration of rank $1$  is a single point of $PG(3,q)$, and   the only configuration of rank $2$  is a set of two distinct  points of $PG(3,q)$. We recall that maximal rank $r$ configurations can be built from maximal rank $(r-1)$ configurations by adding points. There are two ways to add a point $P_3$ to the $\mC_2$ given by $\{P_0, P_1\}$: if $P_2$ lies on the line $L = \overline{P_0, P_1}$, then by Lemma \ref{maxl_uniq}, the unique $\mC_3$ containing this configuration of $3$ points is the line $L$. All such configurations form one orbit denoted $\M_{3a} = G \cdot \overline{P_0, P_1}$ of size $\stirling{4}{2}_q=(1+q^2)(1+q+q^2)$.

If $P_2 \notin L$ then the resulting configuration $S = \{P_0, P_1, P_2\}$ of $3$ points in general position in a plane $\Pi$ is also a $\mC_3$: indeed we may assume $S = \{e_0, e_1, e_2\}$ so that $I_2(S)$ has basis 
\[ \{X_0X_1,X_0X_2,X_1X_2,X_0X_3,X_1X_3,X_2X_3,X_3^2\}. \] 
It is easily checked that $V(I_2(S)) = S$. All such $\mC_3$'s form a single $G$-orbit denoted $\M_{3b} = G \cdot \{e_0, e_1, e_2\}$, and of size $q^3(1+q)^2(1+q^2) (1+q+q^2)/6$.  In summary:\\

 The $G$-orbits in $\fC_r$ for $r \in \{1,2,3\}$ are as follows:
\begin{itemize}
 \item []There is one orbit of $\fC_1$ given by $\M_{1a}$  of size $(1+q)(1+q^2)$. This orbit consists of points of $PG(3,q)$.
 \item []There is one orbit of $\fC_2$ given by $\M_{2a}$ of size $\tbinom{(1+q)(1+q^2)}{2}$. This orbit consists of unordered pairs of points  of $PG(3,q)$.
\item []There are $2$ orbits  in  $\fC_3$:
	\begin{enumerate}
		\item [$\M_{3a}$]   of size $|G(1,3)|$. This orbit consists of the lines of $PG(3,q)$.
		
	\item [$\M_{3b}$]  of size $q^3(1+q)^2(1+q^2) (1+q+q^2)/6$. This orbit consists of unordered triples of non-collinear points of $PG(3,q)$.
	\end{enumerate}
\end{itemize}

\subsubsection{Classification of  $\mC_4$'s}\hfill \\
We now turn to determining the set $\fC_4$.  Let $S$ be a $\mC_4$  containing 
$\M_{3a}$ configuration represented by $\overline{e_0, e_1}$. We may assume $S$ contains $T= \{e_2\} \cup \overline{e_0, e_1}$. Here  $I_2(T)$  has basis \[ \{X_0X_2,X_1X_2,X_0X_3,X_1X_3,X_2X_3,X_3^2\},\] and $V(I_2(T))$ is clearly equal to $T$.  Thus, $T$ is a $\mC_4$. The set of all such configurations forms a single orbit $\M_{4a} = G \cdot T$ whose elements consist of a pair $(L,P)$ where $L$ is a line not containing the point $P$. Since there are $\stirling{4}{2}_q $ ways to pick a line of $PG(3,q)$ and $(q^2+q^3)$ ways to pick a point not on this line, we see that the size of $\M_{4a}$ is $\stirling{4}{2}_q \cdot (q^2+q^3)$ which simplifies to $q^2(1+q)(1+q^2)(1+q+q^2)$.\\

Next, we consider $S$ which is a $\mC_4$  containing a $\M_{3b}$ configuration represented by $\{e_0, e_1,e_2\}$. If $S$ contains a point $P$ which lies on one of the $3$ lines $\overline{e_i,e_j}$, then by maximality, $S$ contains a $\M_{3a}$ configuration, which case has already been treated. We now assume  $S$ contains no other point on any line $\overline{e_i,e_j}$, $0\leq i<j\leq 2$. If $S$ contains another point $P$  of  the plane $\Pi$ spanned by $\{e_0, e_1,e_2\}$ then we may assume $S$ contains $T = \{e_0, e_1,e_2,e_0+e_1+e_2\}$ which has rank $4$. Here $I_2(T)$ has basis 
\[ \{ X_3^2,X_3X_2,X_3X_1,X_3X_0, X_0X_1-X_0X_2,X_0X_1- X_1X_2\}, \] 
and clearly $V(I_2(T))$ equals $T$. Thus, $T$ is a $\mC_4$. All such configurations of $4$  points in general position in a plane of $PG(3,q)$ form a single orbit $\M_{4c} = G \cdot T$, because given a such a $4$-tuple $(P_1, \dots, P_4)$, there are $(q-1)q^3$ elements $g \in PGL_4(q)$ with the property that $g\cdot (e_0, e_1,e_2,e_0+e_1+e_2) = (P_1, P_2,P_3,P_4)$. Thus, 
\[\M_{4c} = \tfrac{1}{4!} \tfrac{|PGL_4(q)|}{q^3(q-1)}=q^3(q^4-1)(q^3-1)(q+1)/24.\]

  On the other hand, if $S$ contains a point $P$ not lying on the plane generated by $\{e_0, e_1,e_2\}$ then we may assume $S$ contains $T =  \{e_0, e_1,e_2,e_3\}$. Here $I_2(T)$ has basis \[ \{X_0X_1,X_0X_2,X_1X_2,X_0X_3,X_1X_3,X_2X_3\},\]
   and clearly $V(I_2(T))$ equals $T$. Thus, $T$ is a $\mC_4$. Given $4$ ordered  points $(P_1, \dots, P_4)$ in general position in $PG(3,q)$, there are $(q-1)^3$ elements $g \in PGL_4(q)$ with the property that $G \cdot (e_0, e_1,e_2,e_3)=(P_1, \dots, P_4)$. Thus, all such configurations form a single orbit  which we denote $\M_{4b}= G \cdot \{e_0, e_1,e_2,e_3\}$ of size  
   \[ |\M_{4b}| = |PGL_4(q)|/ (24 (q-1)^3) =q^6 (1+q)^2 (1+q^2) (1+q+q^2)/24.\]
 We summarize this:\\

 The $G$-orbits of $\fC_4$ are:
	\begin{enumerate}
		 \item [$\M_{4a}$]  of size $q^2(1+q)(1+q^2)(1+q+q^2)$. This orbit consists of pairs $(L,P)$ of a line $L$ and a point $P$ of $PG(3,q)$ such that $P \notin L$.
     \item [$\M_{4b}$]  of size  $q^6(1+q+q^2+q^3)(1+q+q^2)(q+1)/24$. This orbit  consists of  unordered $4$-tuples of  points in general position in $PG(3,q)$.
     \item [$\M_{4c}$] of size   $q^3(q^4-1)(q^3-1)(q+1)/24$. This orbit  consists of  unordered $4$-tuples of  points in general position in some plane of $PG(3,q)$.
	\end{enumerate}

 \subsubsection{Classification of  $\mC_5$'s}\label{max_r_5}\hfill \\
Next, we determine the set $\fC_5$.   Let $S$ be a $\mC_5$  containing a $\M_{4a}$ configuration  represented by $\{e_2\} \cup  \overline{e_0,e_1}$. If $S$ contains another  point $P$ of the plane $\Pi$ spanned by $\{e_0, e_1, e_2\}$, then the line $L = \overline{P ,e_2}$ meets the line $\overline{e_0,e_1}$ in a third point $Q$. The configuration $\{e_2, P ,Q\}$ is contained in the $\M_{3a}$ configuration $L$, and hence  by Lemma \ref{max_lem},  $S$ must contain $L$. Thus, $S$ contains the union of the lines $L$ and $\overline{e_0,e_1}$ which we denote $T$. We may take $Q=e_0$. Here $I_2(T)$ has basis 
\[\{X_1X_2,X_0X_3,X_1X_3,X_2X_3,X_3^2\},\] 
and clearly $V(I_2(T))$ equals $T$. Thus, $T$ is a $\mC_5$. All such configurations consisting of $2$ intersecting lines in $PG(3,q)$ form a single orbit $G \cdot T$  which we denote $\M_{5a}$. Since $\M_{5a}$ consists of a pair of lines in a plane, we see that  the size of $\M_{5a}$ is:
\[ |\M_{5a}|=(1+q+q^2+q^3) \cdot \tbinom{1+q+q^2}{2}=q(1+q)^2(1+q^2)(1+q+q^2)/2.\]

If $S$ contains no other  point $P$ of the plane $\Pi$ spanned by $\{e_0, e_1, e_2\}$, then we may assume $S$ contains $T = \{e_2, e_3\} \cup \overline{e_0,e_1}$. Here $I_2(T)$ has basis 
\[\{ X_0X_2,  X_0X_3, X_1X_2, X_1X_3, X_3X_2\}, \] 
and clearly $V(I_2(T))$ equals $T$. Thus, $T$ is a $\mC_5$. The set of all configurations consisting of a line $L$ together with $2$ points on a line $L'$ skew to $L$ form the class $\M_{5b}$ of size
 \[|M_{4a}| \cdot q^3/2= q^5(1+q)(1+q^2)(1+q+q^2)/2.\]

Next, let $S$ be a $\mC_5$  containing a $\M_{4c}$ configuration  represented by $S_0=\{e_0, e_1,e_2,e_0+e_1+e_2\}$.  Suppose  $S$ contains another  point $P$ of the plane $\Pi$ spanned by $\{e_0, e_1, e_2\}$. If $P$ lies on one of the $6$ lines between the $4$ points of $S_0$, then by maximality, the whole line is contained in $S$, and hence $S$ contains a $\M_{4a}$ configuration, which case has already been treated. If $P$ does not lie on any of the $6$ lines of $S_0$ 
(this case is possible only if $q>3$), then $S_0 \cup \{P\}$ 
are $5$ points in general position in the plane $\Pi$ (i.e., no $3$ of the points are collinear). For  $q \geq 4$, given any $5$ points in general position of $PG(2,q)$, there exists a unique non-degenerate conic passing through these points. Moreover, all the non-degenerate conics of $PG(2,q)$ are $PGL_3(q)$ equivalent. Let $T$ be the conic  $X_1^2-X_0X_2$  in the plane $X_3=0$. Here  $I_2(T)$ has basis 
\[ \{ X_3^2,X_3X_2,X_3X_1,X_3X_0, X_1^2-X_0X_2\}, \] 
 and clearly $V(I_2(T))$ equals $T$. Thus, $T$ is a $\mC_5$. Let $\M_{5e}$ denote this class of $\mC_5$'s consisting of a non-degenerate conic in a plane of $PG(3,q)$. 
   Since there are $|PGL_3(q)|/|PGL_2(q)|=(q^5-q^2)$ non-degenerate conics in  a plane, and there are 
$(1+q)(1+q^2)$ planes in $PG(3,q)$, we see that 
   \[ |\M_{5e}| =(1+q)(1+q^2) \cdot |PGL_3(q)|/|PGL_2(q)| = (q^5-q^2)(1+q^2)(1+q).\]

We now assume $S$ is a  $\mC_5$  containing a $\M_{4c}$ configuration  represented by $\{e_0, e_1,e_2,e_0+e_1+e_2\}$, and $S$ contains no other  point $P$ of the plane $\Pi$ spanned by $e_0, e_1, e_2$. We may assume $S$ contains $T= \{e_3,e_0, e_1,e_2,e_0+e_1+e_2\}$. Here $I_2(T)$ has basis \[ \{ X_3X_2,X_3X_1,X_3X_0, X_0X_1-X_0X_2,X_0X_1- X_1X_2\}, \]
  and clearly $V(I_2(T))$ equals $T$. Thus, $T$ is a $\mC_5$.
 Let $\M_{5c}$ denote this class of $\mC_5$'s consisting of $5$ points $4$ of which are in general position in a plane (there are $| \M_{4c}|$ ways to pick these), and the fifth point lies outside this plane (there are $q^3$ ways to pick this point),  Since there are $| \M_{4c}|$ ways to pick the $4$ points, and $q^3$ ways to pick the fifth point outside the plane,  we see that
\[ |\M_{5c}|= |\M_{4c}| \cdot q^3= q^6(q^4-1)(q^3-1)(q+1)/24.\]

It remains to consider $S$ which is a $\mC_5$ containing a $\M_{4b}$ configuration represented by 
$\{e_0, e_1,e_2,e_3\}$. If $S$ contains a point $P$ which lies on one of the $6$ lines 
$\overline{e_i,e_j}$, say $\overline{e_0,e_1}$, then by maximality, $S$ contains $\M_{4a}$ which case has already been treated. If $P$ does not lie on any of these $6$ lines, but it  lies on one of the $4$ planes formed by $\{e_0, e_1,e_2,e_3\}\setminus \{e_i\}$, $i=0,1,2,3$,  $S$ will be a $\M_{5c}$ configuration. Therefore, we assume any $P \in S \setminus\{e_0, e_1,e_2,e_3\}$  has the property that the points of $T=\{e_0, e_1,e_2,e_3,P\}$  are in general position. Up to $G$-equivalence, we may assume $P = e_0+e_1+e_2+e_3$.
Here $I_2(T)$ has basis 
\[ \{X_0(X_2-X_1), X_1(X_2-X_0), X_0(X_3-X_1), X_1(X_3-X_0), X_2(X_3-X_0)\}, \]
 and $V(I_2(T))=T$. Thus, $T$ is a $\mC_5$.  Since $5$-tuples $(P_1, \dots,P_5)$ of points in general position in $PG(3,q)$ correspond bijectively with $PGL_4(q)$, we see that the size of the class $\M_{5d}$ consisting of unordered $5$-tuples of points in general position in $PG(3,q)$ is 
   \[ |\M_{5d}|=|PGL_4(q)|/5! = q^6(q^4-1)(q^3-1)(q^2-1)/120.\] 
   We summarize this:  \\

 The set $\fC_5$ can be partitioned into the following five classes:
\begin{enumerate}
\item [$\M_{5a}$] of size $q(1+q)^2(1+q^2)(1+q+q^2)/2$. This class consists of a pair of intersecting lines of $PG(3,q)$.
\item [$\M_{5b}$] of size $q^5(1+q)(1+q^2)(1+q+q^2)/2$. This class consists of a line $L$ and two points on a line $L'$ skew to $L$.
\item [$\M_{5c}$] of size $q^6(q^4-1)(q^3-1)(q+1)/24 $.  This class consists of four  points in general position in a plane, together with a point outside the plane.
\item [$\M_{5d}$] of size $q^6(q^4-1)(q^3-1)(q^2-1)/120$.  This class consists of
 $5$ points of $PG(3,q)$ in general position.
 \item [$\M_{5e}$] of size $(q^5-q^2)(1+q^2)(1+q)$. This class consists of a non-degenerate conic in a plane of $PG(3,q)$.
\end{enumerate}

\subsection{Calculations of $B_{j,i}$ for $j>i$}\hfill \\
Since a maximal configuration of rank $1$ and $2$ contains exactly $1$ and $2$ points of $PG(3,q)$ respectively. Therefore, in these cases we can see that $B_{j,i}=0$ for $i \in \{1,2\}$ and  $j>i$.

\subsubsection{$B_{j,3}$ for $j>3$}\hfill \\
Let $T$ be a $\M_{3a}$ configuration, i.e., a line in $PG(3,q)$. Then $T$ has $(1+q)$ points of $PG(3,q)$. Since $j>3$ and any $S \in \fC_i(T)$, $i\leq 2$ has at most $2$ points. By \eqref{eq:B_j_r_alpha}, we have \[B_{j,3}(\M_{3a})=\tbinom{1+q}{j}. \]

A $\M_{3b}$ configuration has $3$ points, so $B_{j,3}(\M_{3b})=0$. Therefore from \eqref{eq:B_j_r}, we have

\begin{equation}\label{BJ3}
    B_{j,3}=|\M_{3a}|\cdot \tbinom{1+q}{j} =(1+q^2)(1+q+q^2)\cdot \tbinom{1+q}{j}.
\end{equation}

\subsubsection{$B_{j,4}$ for $j>4$}\hfill \\
An $\M_{4a}$ configuration $T$ represented by a pair $(L,P)$ of a line $L$ and a point $P$ of $PG(3,q)$,  has size  $(q+2)$ points. Thus:
\[ \fC_1(T)=\fC_2(T)= \varnothing, \; \fC_3(T)= \M_{3a}(T). \]
Since there is only one line in $T$, we have  $|\M_{3a}(T)|=1$. By \eqref{eq:B_j_r_alpha}, we have, \[B_{j,4}(\M_{4a})=\tbinom{q+2}{j}-\tbinom{1+q}{j}.\]

Since the $\M_{4c}$ and $\M_{4b}$ configurations have exactly $4$ points of $PG(3,q)$ and $j>4$, we see that $B_{j,4}(\M_{4b})=B_{j,4}(\M_{4c})=0$. Therefore for $j>4$, by \eqref{eq:B_j_r}, we have

\begin{equation}\label{BJ4}
    B_{j,4}= q^2(1+q)(1+q^2)(1+q+q^2) \cdot (\tbinom{q+2}{j}-\tbinom{1+q}{j}).
\end{equation}
  
\subsubsection{$B_{j,5}$ for $j>5$}\hfill \\
A $\M_{5a}$ configuration $T$ represented by a pair of intersecting lines of $PG(3,q)$ contains $(2q+1)$ points. Thus,
\[ \fC_1(T)=\fC_2(T)=\varnothing, \quad  \fC_3(T)= \M_{3a}(T), \quad 
\fC_4(T)= \M_{4a}(T). \]
We also note that $|\M_{3a}(T)|=2$ and $|\M_{4a}(T)|=2q$.
By \eqref{eq:B_j_r_alpha}, we have, \[B_{j,5}(\M_{5a})=\tbinom{2q+1}{j}-2q \cdot (\tbinom{q+2}{j}-\tbinom{1+q}{j})-2 \tbinom{1+q}{j}=\tbinom{2q+1}{j}-2q \cdot \tbinom{q+2}{j}+2(q-1)\cdot \tbinom{1+q}{j}.\]

\subsubsection*{}
A $\M_{5b}$ configuration $T$ represented by a line $L$ and two points $\{P_1,P_2\}$ on a line $L'$ skew to $L$ contains $(q+3)$ points of $PG(3,q)$. Again,
\[ \fC_1(T)=\fC_2(T)=\varnothing, \quad  \fC_3(T)= \M_{3a}(T), \quad 
\fC_4(T)= \M_{4a}(T). \]
We also note that $|\M_{3a}(T)|=1$ and $|\M_{4a}(T)|=2$.  By \eqref{eq:B_j_r_alpha}, we have
\[B_{j,5}(\M_{5b})=\tbinom{q+3}{j}-2 \cdot (\tbinom{2+q}{j}-\tbinom{1+q}{j})-\tbinom{1+q}{j}=\tbinom{q+3}{j}-2 \cdot \tbinom{2+q}{j}+\tbinom{1+q}{j}.\]

\subsubsection*{}
A $\M_{5c}$ configuration $T$ represented by the $(q+1)$ points of a non-degenerate conic in a plane of $PG(3,q)$. Here 
\[ \fC_1(T)=\fC_2(T)=\fC_3(T)=\fC_4(T)=\varnothing, \]
because $T$ does not contain a line which rules out $\M_{3a}, \M_{4a} $ and configurations in $\fC_i(T)$ must have at least $6$ points which rules out $\M_{3b}, \M_{4b}, \M_{4c}$.
 By \eqref{eq:B_j_r_alpha}, we have 
 \[B_{j,5}(\M_{5c})=\tbinom{1+q}{j}.\]

Since each of the $\M_{5d}$ or $\M_{5e}$ configurations contains $5$ points $PG(3,3)$, we have for $j>5$,
\[ B_{j,5}(\M_{5d})=B_{j,5}(\M_{5e})=0.\]
Therefore for $j>5$, we have
\begin{equation}\label{BJ5}
\begin{split}
 B_{j,5}=&\tfrac{1}{2}q(1+q)^2(1+q^2)(1+q+q^2)\cdot \left( \tbinom{2q+1}{j}-2q \cdot \tbinom{q+2}{j}+2(q-1)\cdot \tbinom{1+q}{j}\right)\\
 &{}+\tfrac{1}{2}q^5(1+q)(1+q^2)(1+q+q^2)\cdot (\tbinom{q+3}{j}-2 \cdot \tbinom{2+q}{j}+\tbinom{1+q}{j})\\
 &{}+(q^5-q^2)(1+q^2)(1+q)\cdot \tbinom{1+q}{j}.   
\end{split}
\end{equation}

\section{Configurations of rank $6$ }
\label{r6}
For the rest of this paper, we assume $q=3$. The size of the group $G = PGL_4(q)$ is
\[|G|=12130560=   2^8 \cdot 3^6 \cdot 5 \cdot 13.\]
Since an irreducible conic in a projective plane has only $4$ points when $q=3$. We see that the class of configuration $\M_{5e}$ in $\fC_5$ does not occur when $q=3$. Therefore, the set $\fC_5$ is partitioned into the following four classes when $q=3$:

\begin{enumerate}
\item [$\M_{5a}$] of size $2^4\cdot 3\cdot5\cdot13$.
This class consists of a pair of intersecting lines of $PG(3,3)$.

\item [$\M_{5b}$] of size $2^2 \cdot  3^5 \cdot 5  \cdot 13$  This class consists of a line $L$ and two points on a line $L'$ skew to $L$.

\item [$\M_{5c}$] of size $2^4 \cdot  3^5 \cdot 5  \cdot 13$.   This class consists of four  points in general position in a plane, together with a point outside the plane.
\item [$\M_{5d}$] of size $2^5 \cdot  3^5  \cdot 13$. This class consists of $5$ points of $PG(3,3)$ in general position.
\end{enumerate}

As before, we build the $\mC_6$'s by adding points to $\mC_5$'s. We carry this out systematically by running through the classes in $\fC_5$.

\begin{lemma}\label{r-6}
The set $\fC_6$ can be partitioned into the following 9 classes: 
 \begin{enumerate}
\item [$\M_{6a}$] of size $2^3 \cdot 5$. It  consists of planes of $PG(3,3)$.
\item [$\M_{6b}$]  of size  $2^4 \cdot  3^4 \cdot 5  \cdot 13$.
It  consists of configurations $(\{L_1, L_2\},\{P\})$ where $\{L_1, L_2\}$ is  a pair of intersecting lines, and $P$ does not lie on the plane generated by the two lines.
\item [$\M_{6c}$]   of size  $3^4 \cdot 5  \cdot 13$.
It  consists of a pair of skew lines in $PG(3,3)$.
\item [$\M_{6d}$]   of size $2^6 \cdot  3^4 \cdot 5  \cdot 13$. It consists of configurations $(L, \{P_1, \dots, P_4\})$ where $\{P_1, \dots, P_4\}$  are $4$   points in general position in a plane $\Pi$ and the line $L$ intersects $\Pi$ in one of these $4$  points.
\item [$\M_{6e}$]   of size $2^4 \cdot  3^5 \cdot 5  \cdot 13$.
It consists of configurations $\{P_1, \dots, P_6\}$ where the $3$ lines $L_i = \overline{P_{2i-1} ,P_{2i}}$ are non-coplanar lines meeting at a single point different from $\{P_1, \dots, P_6\}$.
\item [$\M_{6f}$]   of size $2^4 \cdot 3^6 \cdot 5 \cdot 13$,
The elements of $\M_{6f}$ are configurations $\{P_1, \dots, P_6\}$ where   $P_1, P_2$ are on a line $L$,  and $P_3, P_4$ are on a line $L'$,  and $P_5, P_6$ are on a line $L''$ with $\{L', L''\}$ being a pair of skew lines which meet $L$ at distinct points $P, Q \notin \{P_1, P_2\}$.

\end{enumerate}
\end{lemma}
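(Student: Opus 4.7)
The plan is to follow the systematic ``Step~1'' methodology laid out in Section~\ref{Approach}: every $\mC_6$ contains some $\mC_5$ by Lemma~\ref{max_lem}, and since the class $\M_{5e}$ vanishes when $q=3$, the only starting points are the four classes $\M_{5a}$, $\M_{5b}$, $\M_{5c}$, $\M_{5d}$. For each class I take the concrete representative $S$ already fixed in \S\ref{max_r_5}, and enumerate up to the action of $\mathrm{Stab}_G(S)$ the orbits of points $P \in PG(3,3) \setminus S$; for each such orbit I form $S' = S \cup \{P\}$, compute a basis of $I_2(S')$ by linear algebra, and evaluate the saturation $\overline{S'} = V(I_2(S'))$ explicitly. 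Each saturation either has rank strictly less than $6$ (in which case the extension is not of interest), falls into a class already produced, or yields a new representative. By Lemma~\ref{maxl_uniq} every $\mC_6$ is obtained this way, so grouping the resulting saturations by $G$-orbit produces a complete partition of $\fC_6$.

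\textbf{Case work.} From $\M_{5a}$ (a pair of intersecting lines spanning a plane $\Pi$), any point of $\Pi$ outside the two given lines forces $\overline{S'} = \Pi$, yielding $\M_{6a}$; a point off $\Pi$ leaves the two lines plus one point as a maximal configuration, yielding $\M_{6b}$, verified by exhibiting the explicit basis of $I_2(S')$ and checking it cuts out only the six points. From $\M_{5b}$ (a line $L$ plus two points on a skew line $L'$), adjoining the third point of $L'$ completes $L'$ by saturation and produces the pair of skew lines $\M_{6c}$; other positions of $P$ produce $\M_{6d}$, $\M_{6e}$, $\M_{6f}$ or reduce to earlier classes via the transversal structure of $L \cup L'$. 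The extensions arising from $\M_{5c}$ and $\M_{5d}$ are treated analogously and, together with the cases above, account for all the listed classes. The size of each class is then computed by the orbit-stabilizer formula $|G \cdot T| = |G|/|\mathrm{Stab}_G(T)|$ with $|G| = 12130560$: for instance, $|\M_{6a}|$ is the number of planes of $PG(3,3)$, namely $40 = 2^3 \cdot 5$; $|\M_{6c}|$ counts unordered pairs of skew lines, obtained by first choosing a line and then a disjoint line, which simplifies to the stated factorization; the remaining sizes follow from analogous incidence counts.

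\textbf{Main obstacle.} The technical heart of the argument is the bookkeeping of the case analysis. One must (i) enumerate every $\mathrm{Stab}_G(S)$-orbit of points outside $S$ for each $S \in \{\M_{5a}, \dots, \M_{5d}\}$ without omission; (ii) verify for each candidate representative $T$ that $V(I_2(T)) = T$, by explicitly exhibiting a basis of $I_2(T)$ and solving the resulting system of quadrics; and (iii) recognise when two different $\mC_5$ starting points give rise to the same $\mC_6$, so as to avoid double-counting. Step (ii) is the most delicate because the small-field peculiarities of $q=3$ produce unusual coincidences (for instance, the conic $X_1^2 - X_0 X_2$ in $PG(2,3)$ has only four points, a phenomenon which already eliminated $\M_{5e}$ and which propagates into the rank-$6$ analysis). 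As a numerical cross-check, the stated sizes must sum to the total count $\#\fC_6$, which can be independently computed as $B_{6,6}$ from \eqref{eq:Bjj} using the values of $B_{6,i}$ for $i<6$ obtained from \eqref{BJ3}, \eqref{BJ4}, \eqref{BJ5}.
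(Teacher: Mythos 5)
Your overall strategy is exactly the paper's: since $\M_{5e}$ is empty for $q=3$, extend each of $\M_{5a},\dots,\M_{5d}$ by a point, saturate via $V(I_2(\cdot))$, and sort the results into $G$-classes. However, as written the argument has a genuine gap where the lemma is actually hardest. The classes $\M_{6e}$ and $\M_{6f}$ do not arise from $\M_{5b}$ as your case work suggests; in the paper they emerge from extending the $\M_{5c}$ configuration $\{e_3\}\cup\{e_0,e_1,e_2,e_0+e_1+e_2\}$, and identifying them requires a non-trivial trichotomy on where the line $\overline{e_3,P}$ meets the plane $X_3=0$ relative to the six lines spanned by the four coplanar points (a vertex, one of the three ``double'' points $e_i+e_j$, or a point on exactly one line). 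This is precisely the step where the small-field coincidences you flag actually bite, and it cannot be dismissed as ``analogous'' to the $\M_{5a}$/$\M_{5b}$ cases: it produces the two genuinely new configuration types (three concurrent non-coplanar lines with two marked points each; the six-point configuration on three mutually constrained lines), and their sizes ($2^4\cdot 3^5\cdot 5\cdot 13$ and $2^4\cdot 3^6\cdot 5\cdot 13$) require either the stabilizer orders ($48$ and $16$, which the paper computes explicitly) or a careful incidence count — neither follows from the pattern of the plane and skew-line counts you do exhibit. The completeness claim (that extending $\M_{5d}$ yields nothing new) likewise needs the observation that no six points of $PG(3,3)$ are in general position, which forces a collinearity or coplanarity reducing to earlier cases; this is stated nowhere in your proposal.

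A second, smaller error: your proposed cross-check ``the stated sizes must sum to $\#\fC_6 = B_{6,6}$'' rests on a false identity. $B_{6,6}$ counts six-point subsets of rank $6$, whereas $\#\fC_6=\sum_\alpha|\M_{6\alpha}|$ counts maximal configurations; a plane alone contributes one element to $\fC_6$ but $\tbinom{13}{6}$ minus lower-rank corrections to $B_{6,6}$. The valid consistency check is $B_{6,6}=\sum_\alpha|\M_{6\alpha}|\cdot B_{6,6}(\M_{6\alpha})$ with the per-class counts computed via \eqref{eq:B_j_r_alpha}, which is a different (and more laborious) computation than the one you describe.
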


\begin{proof}
Let $S$ be a $\fC_6$ configuration.
\subsubsection*{{\bf Classes $\M_{6a}, \M_{6b}$}} \hfill \\ We begin with $\mC_6$  configurations $S$ containing a $\M_{5a}$ configuration represented by  the pair of lines $X_1X_2=0$ in the plane $X_3=0$. If $S$ contains another point $P$ of the plane $X_3=0$, then by maximality of $\M_{5a}$, the configuration $\{P\} \cup \M_{5a}$ has rank $6$. We have already shown that a plane of $PG(3,3)$ is $\mC_6$, therefore, $S$ is represented by the plane $X_3=0$. Let $\M_{6a}$ denote the class of configurations consisting of planes of $PG(3,3)$. The size of this class is $40$.

If $S$ contains no other point of the plane $X_3=0$, then we may assume $S$ contains $T = \overline{e_0,e_1}\cup \overline{e_0,e_2}\cup \{e_3\}$.  Here $I_2(T)$ has basis $\{X_1X_2,X_0X_3,X_1X_3,X_2X_3\}$ and clearly $V(I_2(T))$ equals $T$. Thus, $T$ is a $\mC_6$.
Let $\M_{6b}$ denote the class of configurations consisting of a pair of intersecting lines (which can be chosen in $|\M_{5a}|$ ways) together with a point not on the plane generated by the pair of lines (which can be chosen in $3^3$ ways).  The size of  
this class is
\[  |\M_{6b}|= |\M_{5a}|\cdot 3^3 = 3^4(1+3)(1+3+3^2)(1+3+3^2+3^3)/2=84240 .\]

\subsubsection*{{\bf Classes $\M_{6c}, \M_{6d}$}}\hfill \\
Next, we consider $\mC_6$  configurations $S$ which contain the  $\M_{5b}$ configuration represented by  $S_0 =  \overline{e_0,e_1} \cup \{e_2,e_3\}$. Let $L = \overline{e_0,e_1}$ and let $L'=\overline{e_2,e_3}$.  If $P \in S$ lies on the line $L'=\overline{e_2,e_3}$, then by maximality, $S_0 \cup \{P\}$ has rank $6$, and is contained in the configuration $T=L \cup L' = \overline{e_0,e_1} \cup \overline{e_2,e_3}$.  Here $I_2(T)$ has basis $\{X_0X_2,X_0X_3,X_1X_2,X_1X_3\}$ and clearly $V(I_2(T))$ equals $T$. Thus, $T$ is a $\mC_6$. Let 
$\M_{6c}$ denote this class consisting of a pair of skew lines. This class has size
\[  |\M_{6c}|= 130\cdot 3^4/2=5265.\]

We now assume  $S \supset S_0$ does not contain any other point of $L \cup L'$. Given a point $P$ of $S \setminus S_0$, if one of the lines $\overline{P, e_2}$ or $\overline{P ,e_3}$ meets the line $L = \overline{e_0,e_1}$, then by maximality, $S$ contains a $\M_{5a}$ configuration, which has already been treated. Thus, the three points $P, e_2, e_3$ are non-collinear and $L$ does not intersect any of the $3$ lines generated by $P, e_2,e_3$. Up to $G$-equivalence, we may assume $T=\{P\} \cup S_0$ is represented by  $\{e_0, e_1,e_2\} \cup \overline{e_3,e_0+e_1+e_2}$.  Here $I_2(T)$ has basis 
 \[ \{X_3(X_2-X_0),  X_3(X_1-X_0),   X_0(X_1-X_2),   X_1(X_0- X_2) \}, \] and we verify that  $V(I_2(T))$ equals $T$. Thus, $T$ is a $\mC_6$. Let $\M_{6d}$ be the class  consisting of $4$  points in general position in a plane $\Pi$  (which can be chosen in $|\M_{4c}|$ ways), together with a line 
 which intersects $\Pi$ in one of these $4$  points (which can be chosen in $4\cdot 3^2$ ways). 
 Thus, the size of this class is
 \[ |\M_{6d}| = 4 \cdot 3^2 \cdot |\M_{4c}|  = 3^5(3^4-1)(3^3-1)(3+1)/6=336960.\]

\subsubsection*{{\bf Classes $\M_{6e}, \M_{6f}$}} \hfill \\
Next we consider $\mC_6$  configurations $S$ which contain the  $\M_{5c}$ configuration $S_0= 
\{e_3,e_0, e_1,e_2,e_0+e_1+e_2\}$. If $S$ contains a point $P$ of the plane $X_3=0$, then by maximality, $S$ contains a $\M_{5a}$, which case has already been treated. So we assume $S \setminus S_0$ contains  no other point of the plane $X_3=0$. For $P \in S \setminus S_0$, let  $T = \{P, e_3,e_0, e_1,e_2,e_0+e_1+e_2\}$.

Let  $Q$ be the intersection point of the  line $\overline{e_3, P}$ with the plane $X_3=0$.
Consider the $6$ lines $L_1, \dots, L_6$ in the plane $X_3=0$  generated by $\{P_1, \dots, P_4\}=\{e_0, e_1,e_2,e_0+e_1+e_2\}$. Since $q=3$, the union of these $6$ lines is the whole plane $X_3=0$. If a point $R$ lies on $3$ of these lines then $R \in \{P_1, \dots, P_4\}$. There are three points $R_1 = e_0+e_1,R_2=e_1+e_2,R_3=e_2+e_0$ which lie on exactly two of these $6$ lines. There are three possibilities for the point $Q$ above: i) $Q \in \{e_0, e_1,e_2,e_0+e_1+e_2\}$, ii) $Q \in \{R_1,R_2,R_3\}$, iii) $Q$ lies on exactly one of the lines $L_1, \dots, L_6$.  We consider each of these possibilities one by one. \\

i) If $Q\in\{P_1, \dots, P_4\}$,  then by maximality, $S$ contains a $\M_{5b}$ configuration, which case has already been treated.\\

ii)  If $Q \in \{R_1, R_2, R_3\}$ say $Q = e_0+e_1$, then we may assume $P = e_0+e_1+e_3$  (by replacing $e_3$ with $c e_3$ for a suitable scalar $c$). We note that each of the three lines $\overline{Q ,e_0}, \overline{Q ,e_2}, \overline{Q ,e_3}$ through $Q$ contain exactly $2$ points of $T$, namely $\{e_0, e_1\}, \{e_2, e_0+e_1+e_2\}, \{e_3, e_0+e_1+e_3\}$. Thus, $T$ is $G$-equivalent to the configuration $\{e_0, e_0+e_3, e_1, e_1+e_3,e_2, e_2+e_3\}$, with $e_3$ representing the point of concurrence of the $3$ lines $\overline{e_3, e_i}$ for $i=0,1,2$. Here $I_2(T)$ has basis 
\[ \{ X_0X_1, X_1X_2, X_0X_2, X_3(X_0+X_1+X_2-X_3)\}, \] 
 and clearly $V(I_2(T))$ equals $T$. Thus, $T$ is a $\mC_6$. Let $\M_{6e}$ be this class of configurations consisting of $2$ points each on three non-coplanar lines through a point $P$. 
The configuration $\M_{6e}$ is obtained by i) picking  a point $P$ and three non-coplanar lines through $P$, and ii) picking $2$ points on each of these $3$ lines different from $P$.   
The size of this  class is
\[|\M_{6e}|=  40 \cdot  \tfrac{1}{6} (1+3+3^2)(1+3)3^3   \cdot  \tbinom{3}{2}^3 =252720.\]
For later use, we note that  the stabilizer group in $G$ of the $\M_{6e}$ configuration $T$ is group of order $48$ which is the wreath product $Z_2 \wr S_3$ where $S_3$ permutes the three lines through $e_3$ (generated by $\bbsm P&0\\0&1 \besm $ where $P$ is  $3 \times 3$ permutation matrix) and $Z_2$ interchanges the $2$ points on a line, (for example generated by $g_0=\bbsm -1&0&0&0\\0&1&0&0\\0&0&1&0\\-1&0&0&1 \besm$ for the line through $\overline{e_0,e_3}$). In more detail, if $g_0, g_1, g_2$ are involutions interchanging the points $\{ e_i, e_i + e_3\}$ for $i=0,1,2$ respectively, then $g_0, g_1, g_2$ commute with each other and hence generate the group $Z_2 \times Z_2 \times Z_2$. The group $S_3$ above acts on 
 $Z_2 \times Z_2 \times Z_2$ by permutation action on the three factors.  As a corollary, we note that given a point $Q = ae_0+be_1+ce_2+de_3$ with $abc \neq 0$, there exist automorphisms of the $\M_{6e}$ configuration that allow us to independently change the sign of $a, b$ and $c$ (the coordinate $d$ is not necessarily fixed). Thus, up to $G$-equivalence we can assume $Q = e_0+e_1+e_2+d' e_3$ for some $d'\in \bF_3$. 

iii) If $Q$ is on exactly one of the lines $L_1, \dots, L_6$,  say $\overline{e_0,e_1}$ then $Q \notin \{R_1, R_2,R_3\}$ and hence $Q \neq e_0+e_1$. Thus, $P = e_0+ \lambda e_1+e_3$ for some $\lambda \in \bF_3 \setminus\{0,1\}$, i.e. $\lambda=-1$.  Thus,
 \[ T = \{e_0,e_1,e_2,e_0+e_1+e_2, e_3, e_3+e_0-e_1\}.\]
 Here $I_2(T)$ has basis 
 \[ \{ X_3X_2, X_3(X_1+ X_0), X_0(X_1-X_2+ X_3), X_2(X_0-X_1)\},\] 
 and it is readily checked that  $V(I_2(T))$ equals $T$. Thus, $T$ is a $\mC_6$. We will denote this class configuration by $\M_{6f}$. We note that $6$ points of $T$ generate $14$ planes of which only $2$ of them have $4$ points: these are the planes $X_2=0$ and $X_3=0$. Thus, the class $\M_{6f}$ consists of configurations $G$-equivalent to $T$. An element of $\M_{6f}$ can be constructed by  i) choosing $2$ points $P_1, P_2$  on the line of intersection of two planes $\Pi_1, \Pi_2$ ii) $2$ points $P_3, P_4$ on  the plane $\Pi_1$ and iii) $2$ points $P_5, P_6$ with the property that if $Q_1 = \overline{P_3,P_4} \cap L$ and $Q_2=\overline{P_5,P_6} \cap L$ then the points $P_1, P_2, Q_1, Q_2$ are distinct points of $L$.
 The pair of planes can be chosen in $\tbinom{40}{2}$ ways. The points $P_1, P_2$ can be chosen in $\tbinom{3+1}{2}$ ways, and the points $Q_1, Q_2$ can be chosen in $(3-1)(3-2)$ ways. The points $P_3, P_4$ and $P_5,P_6$ can be chosen in $(3 \tbinom{3}{2})^2$ ways.
Thus, the  class $\M_{6f}$ has size
\[ |\M_{6f}| = \tbinom{40}{2} \cdot \tbinom{3+1}{2}  \cdot (3-1)(3-2) \cdot (3 \tbinom{3}{2})^2 =758160. \]
For later use, we record   the stabilizer group in $G$ of the $\M_{6f}$ configuration $T$.
Consider the following four involutions which preserve $T$:
\[ g_0=\bbsm &1&&\\1&&&\\&&1&\\&&&-1\besm, \,
g_1=\bbsm &1&-1&\\1&&-1&\\&&-1&\\&&&-1
 \besm, \, 
 g_2=\bbsm &1&&1\\1&&&-1\\
&&1&0\\&&&1
 \besm, \,
 g_3=\bbsm -1&&&\\&1&&\\&&&-1\\
&&-1&\besm. \] 
 We note that i) $g_0$ interchanges $\{P_1,P_2\}$, 
ii) $g_1$ interchanges $\{P_1,P_2\}$,  $\{P_3,P_4\}$,  iii) $g_2$  interchanges $\{P_1,P_2\}$,  $\{P_5,P_6\}$, and iv) $g_3$  interchanges $\{P_3,P_5\}$ and $ \{P_4,P_6\}$. 
The matrices $g_1, g_2$ commute and generate the group $Z_2 \times Z_2$. The matrix $g_3$
  and satisfies $g_3g_1g_3=g_2$, $g_3g_2g_1=g_2$. Thus, the group generated by $g_1, g_2, g_3$  is isomorphic to
the dihedral group.
The element $g_0$ commutes with $g_2, g_3, g_4$. Thus, the stabilizer group of $T$ 
 is isomorphic to the direct product $D_4 \times Z_2$
 \[ Z_2 \times D_4 : \langle g_0, g_1,g_2,g_3 \;|\; g_0^2,g_1^2,g_2^2,g_3^2, (g_1g_2)^2,(g_0g_1)^2,(g_0g_2)^2,(g_0g_3)^2,g_3g_1g_3g_2 \rangle. \]

Finally, we come to the case when $S$ contains a $\M_{5d}$  represented by $5$ points $\{P_1, \dots, P_5\}$ in general position in $PG(3,3)$. We may take $\{P_1, \dots, P_5\}=\{e_0, e_1,e_2,e_3, e_0+e_1+e_2+e_3\}$.  If $P$ is another point of $S$, let $T = \{P, P_1, \dots, P_5\}$. Since in $PG(3,3)$ there is no collection of $6$ points in  general position, there are only two possibilities: i)  some $3$ points of $T$ are collinear, or ii)  no $3$ points of $T$ are collinear but some $4$ points of $T$ are coplanar. In case (i), we may assume $P$ lies on the  line $\overline{P_4P_5}$. In this case by maximality, $S$ contains a $\M_{6d}$ configuration, and  
in case (ii),  $S$ contains a $\M_{6f}$ configuration. These cases have already been treated above.

\end{proof}

\subsection{$B_{j,6}$ for $j>6$}
\subsubsection*{}
A $\M_{6a}$ configuration $T$ represented by a plane $\Pi$ contains $13$ points of $PG(3,3)$. Since $j \geq 7$, we have 
\[ \fC_1(T) =\dots=\fC_4(T)=\varnothing, \quad \fC_5(T)=\M_{5a}(T).\]
We note that $|\M_{5a}(T)|=\tbinom{13}{2}=78$. By \eqref{eq:B_j_r_alpha}, we have, \[B_{j,6}(\M_{6a})=\tbinom{13}{j}-78\tbinom{7}{j}.\]

\subsubsection*{}
A $\M_{6b}$ configuration $T$ of  $8$ points of $PG(3,3)$ represented by a pair of intersecting lines $L_1, L_2$ and point $P$ outside the plane $\langle L_1, L_2\rangle$. Here, again:
\[ \fC_1(T) =\dots=\fC_4(T)=\varnothing, \quad \fC_5(T)=\M_{5a}(T),\]
and $|\M_{5a}(T)|=1$. By \eqref{eq:B_j_r_alpha}, we have, \[B_{j,6}(\M_{6b})=\tbinom{8}{j}-\tbinom{7}{j}.\]

\subsubsection*{}
A $\M_{6c}$ configuration $T$ of  $8$ points of $PG(3,3)$ represented by a pair of skew lines $L_1, L_2$. Here $\fC_i(T)=\varnothing$ for all $1 \leq i \leq 5$. By \eqref{eq:B_j_r_alpha}, we have, \[B_{j,6}(\M_{6c})=\tbinom{8}{j}.\]

\subsubsection*{}
A $\M_{6d}$ configuration $T$ represented by $(L, \{P_1, \dots, P_4\})$ where $\{P_1, \dots, P_4\}$  are $4$   points in general position in a plane $\Pi$, and the line $L$ intersects $\Pi$ at $P_4$. The configuration $T$ contains $7$ points of $PG(3,3)$. Here again, $\fC_i(T)=\varnothing$ for all $1 \leq i \leq 5$, and hence by \eqref{eq:B_j_r_alpha}, we have, \[B_{j,6}(\M_{6d})=\tbinom{7}{j}.\]

We have, $B_{j,6}(\M_{6e})=B_{j,6}(\M_{6f})=0$ as each of $\M_{6e}$ or $\M_{6f}$ configurations contain exactly $6$ points of $PG(3,3)$. Therefore, by \eqref{eq:B_j_r}, for $j>6$, we have 

\begin{equation*}
B_{j,6}=40 \cdot (\tbinom{13}{j}-78\tbinom{7}{j})+84240\cdot (\tbinom{8}{j}-\tbinom{7}{j})+5265\cdot \tbinom{8}{j}+336960\cdot \tbinom{7}{j},
\end{equation*}
i.e. 
\begin{equation}\label{BJ6}
B_{j,6}=40 \cdot \tbinom{13}{j}+89505\cdot \tbinom{8}{j}+249600\cdot \tbinom{7}{j}.
\end{equation}

\section{Configurations of rank $7$} \label{r7}
As before, we build the $\mC_7$'s by adding points to $\mC_6$'s. We carry this out systematically by running through the classes in $\fC_6$.
\begin{lemma} \label{list7}
The set $\fC_7$ can be partitioned into the following 7 classes:  
\begin{enumerate}
\item [$\M_{7a}$] of size $2^3 \cdot 3^3 \cdot 5$.
 This class consists of a plane of $PG(3,3)$ and a point outside that plane. 

\item [$\M_{7b}$] of size $2^4 \cdot 3^2 \cdot 5 \cdot 13$. This class consists of $3$ concurrent non coplanar lines of $PG(3,3)$.

\item [$\M_{7c}$] of size 
$ 2^4 \cdot 3^4 \cdot 5 \cdot 13$. This class consists of three lines $L_1,L_2,L_3$ of $PG(3,3)$, where $L_2$ and $L_3$ are skew to each other and the line $L_1$ meets both the lines. 

\item [$\M_{7d}$] of size 
$2^5 \cdot 3^5 \cdot 5 \cdot 13$. This class consists of configurations $(\{L_1,L_2\},\{P_1,P_2\})$, where $\{L_1,L_2\}$ is a pair of intersecting lines and $P_1,P_2$ lie outside the plane $\langle L_1, L_2\rangle$. Also, the line $\overline{P_1,P_2}$ does not meet $L_1$ and $L_2$. 

\item [$\M_{7e}$] of size 
$ 2^5 \cdot 3^6 \cdot 5 \cdot 13$.
 This class consists of configurations $(\{P_1,P_2,P_3,P_4\},L)$, where the points $P_1,P_2,P_3,P_4$ are in general position in $PG(3,3)$ and the line $L$ does not meet any of the $6$ lines $\overline{P_i,P_j}$, where $1\leq i < j \leq 4$. 

\item [$\M_{7f}$] of size $2^6 \cdot 3^5 \cdot 5 \cdot 13$. This class consists of configurations $\{P_1, \dots, P_6,P_7\}$ where the $3$ lines $L_i = \overline{P_{2i-1} ,P_{2i}}$, $i=1,2,3$, are non-coplanar lines meeting at a single point $Q$ different from $\{P_1, \dots, P_6\}$.  Let  $L_4$ be a line through $Q$  such that no three of the four lines $\{L_1, L_2, L_3, L_4\}$ are coplanar.  Of the three points $\{Q_1, Q_2, Q_3\}$ on $L_4$ different from $Q$, there is only one point $Q_1$ which lies on two of the planes spanned by three points of  $\{P_1, \dots, P_6\}$,   whereas the other two points $\{Q_2, Q_3\}$ lie on three such planes. We take $P_7=Q_1$.
\item [$\M_{7g}$] of size $2^2 \cdot 3^5 \cdot 5 \cdot 13$. This class consists of configurations $\{P_1, \dots, P_8\}$ where the four lines $L_i = \overline{P_{2i-1}, P_{2i}}$, $i=1,2,3,4$, meet at a single point $Q$ different from $\{P_1, \dots, P_8\}$ and no three of these four lines  are coplanar.  If we arbitrarily pick $\{P_{2i-1}, P_{2i}\}$ on $L_i$ for $i = 1, \dots, 3$, then the points $\{P_7, P_8\}$ on $L_4$ are the pair of points $\{Q_2, Q_3\}$ in the notation of the description of $\M_{7f}$
configuration above.
\end{enumerate}
\end{lemma}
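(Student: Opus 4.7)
The proof will proceed by the same recursive scheme used for Lemma \ref{r-6}. For each of the six classes $\M_{6\alpha}$ in $\fC_6$, I fix a canonical representative $S_0$ and enumerate, up to $G$-equivalence, every way to adjoin a point $P\in PG(3,3)\setminus S_0$ so that $T = S_0\cup\{P\}$ has rank $7$. When $T$ is itself a $\mC_7$, I will exhibit an explicit basis of $I_2(T)$ and verify $V(I_2(T))=T$; when $T$ instead has rank $<7$, by Lemma \ref{max_lem} the unique $\mC_7$ containing it must contain some previously processed $\M_{6\beta}$-subconfiguration, so that case will already have been counted in a prior branch. This bookkeeping is what prevents double-counting across the six starting classes.

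Starting from $\M_{6a}$, any added point must lie outside the plane and immediately produces the plane-plus-external-point class $\M_{7a}$. Starting from $\M_{6b}$ (two intersecting lines $L_1,L_2$ together with an external point $Q$), the added point either lies in $\langle L_1,L_2\rangle$ (forcing $T$ to contain a plane, already handled by $\M_{6a}$), or completes a new concurrent/transversal configuration (yielding $\M_{7c}$), or sits in generic position with respect to $Q$ (yielding $\M_{7d}$). Starting from $\M_{6c}$ (a skew pair $L,L'$), adjoining a point on a transversal recovers $\M_{7c}$, while a generic added point gives $\M_{7e}$. Starting from $\M_{6d}$ we rederive $\M_{7c}, \M_{7d}, \M_{7e}$ by a parallel case split. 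Starting from $\M_{6e}$ (three concurrent non-coplanar lines through a point $Q$, with two points on each), completing one of the $L_i$ to a full line yields $\M_{7b}$, while a fourth concurrent line through $Q$ produces either $\M_{7f}$ or $\M_{7g}$ depending on the incidence type of the added point; starting from $\M_{6f}$ yields only configurations in classes already listed.

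Each $|\M_{7\alpha}|$ is obtained by a direct combinatorial choice argument: for example, $|\M_{7a}|=|\M_{6a}|\cdot(3^4-13)$; $|\M_{7b}|$ counts a point $Q$ together with an unordered triple of non-coplanar lines through it; $|\M_{7c}|$ counts an ordered transversal line cutting a skew pair; and so on. The stabilizer structure inherited from the $\M_{6e}$ analysis (the wreath product $Z_2\wr S_3$ permuting the three concurrent lines) provides an independent check of $|\M_{7f}|, |\M_{7g}|$ through the identity $|\M_{7\alpha}|=|G|/|\mathrm{Stab}_G(T)|$.

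The main obstacle is the dichotomy between $\M_{7f}$ and $\M_{7g}$. Over $\bF_3$ a fourth line $L_4$ through $Q$ carries exactly three points distinct from $Q$, and I must show that these three points split canonically as $1+2$: a single point $Q_1$ that lies on exactly two of the planes spanned by triples from $\{P_1,\dots,P_6\}$, versus two points $Q_2,Q_3$ each of which lies on three such planes. The key step is therefore an incidence count of how the planes through triples of $\{P_1,\dots,P_6\}$ meet $L_4\setminus\{Q\}$, and an argument that this count is independent of the choice of $\M_{6e}$-representative and of $L_4$, so that the partition is $G$-invariant. Once this trichotomy is in place, maximality of the representatives of $\M_{7f}$ and $\M_{7g}$ follows from writing down explicit bases of $I_2(T)$ in coordinates and verifying $V(I_2(T))=T$, just as in the proof of Lemma \ref{r-6}.
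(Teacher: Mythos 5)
Your overall architecture --- building each $\mC_7$ by adjoining points to representatives of the six classes of $\fC_6$, verifying maximality via explicit bases of $I_2(T)$, and deferring any case whose closure contains a previously treated class --- is exactly the paper's scheme, and your identification of the $\M_{7f}$/$\M_{7g}$ dichotomy as the delicate point is accurate (the paper resolves it by computing $V(I_2(T))$ for $P_7 = ae_0+be_1+ce_2+de_3$ and finding that the closure acquires the extra point $ae_0+be_1+ce_2+(a+b+c-d)e_3$ unless $2d=a+b+c$; your plane-incidence count is a plausible substitute). However, two of your case analyses contain concrete errors. First, in the $\M_{6c}$ branch there is no ``generic added point gives $\M_{7e}$'' case: through any point $P$ of $PG(3,3)$ off a skew pair $L,L'$ there passes a unique transversal meeting both lines, so by Lemma \ref{max_lem} the closure of $L\cup L'\cup\{P\}$ always contains a third line and is an $\M_{7c}$ configuration. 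Indeed an $\M_{7e}$ configuration contains exactly one full line, so it cannot contain an $\M_{6c}$; the class $\M_{7e}$ arises only from the $\M_{6d}$ branch. Second, your sample count $|\M_{7a}|=|\M_{6a}|\cdot(3^4-13)$ is wrong: $PG(3,3)$ has $40$ points, not $81$, so the correct factor is $40-13=27=3^3$, giving $40\cdot 27=1080$ as in the statement.

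Two further points need attention. Your $\M_{6b}$ branch omits the sub-case in which the added point $P$ is such that the line joining $P$ to the external point passes through $L_1\cap L_2$, so that the closure is three concurrent non-coplanar lines; this is precisely where the paper produces $\M_{7b}$. Recovering $\M_{7b}$ from $\M_{6e}$ instead is legitimate, but then the $\M_{6b}$ case analysis must explicitly recognize and defer that sub-case, otherwise it is incomplete. Finally, for $|\M_{7f}|$ and $|\M_{7g}|$ a ``direct combinatorial choice'' of the form $|\M_{6e}|\cdot\#\{P_7\}$ overcounts unless you also determine how many $\M_{6e}$ sub-configurations each resulting $\mC_7$ contains: the paper proves this multiplicity is $1$ for $\M_{7f}$ and $16$ for $\M_{7g}$ (whence $|\M_{7g}|=4\,|\M_{6e}|/16$), and this requires the explicit enumeration of the planes through triples of the eight points. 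Your orbit--stabilizer cross-check can replace that argument only after you compute the stabilizers of the $\M_{7f}$ and $\M_{7g}$ representatives, which is comparable work and is not sketched in your proposal.
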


\begin{proof}
Let $S$ be a $\mC_7$  configuration. 

\subsubsection*{{\bf Class $\M_{7a}$}}\hfill \\
If $S$ contains a $\M_{6a}$ configuration represented by the plane $\Pi$ given by $X_3=0$, then we may assume  $S$ contains $T = \{e_3\} \cup \Pi$.  Here $I_2(T)$ has basis $\{X_3X_0,X_3X_1, X_3X_2\}$ and $V(I_2(T))$ clearly equals $T$. Thus, $T$ is a $\mC_7$ which we denote $\M_{7a}$. The size of $\M_{7a}$  is $3^3 \cdot |\M_{6a}|= 1080$.

\subsubsection*{{\bf Classes $\M_{7b}$, $\M_{7c}$, $\M_{7d}$}}\hfill \\
If $S$ contains a  $\M_{6b}$ configuration represented by $S_0=\overline{e_0,e_1}\cup \overline{e_0,e_2}\cup \{e_3\}$, and $P$ is another point of $S$, there are $4$ possibilities: i) $P$ lies on the plane $\Pi: X_3=0$, ii) $P$ lies outside $\Pi$, and the line $\overline{P ,e_3}$ meets $\Pi$ at $e_0$, iii) $P$ lies outside $\Pi$, and the line $\overline{P ,e_3}$ meets $\Pi$ at a point $Q$ of $S_0$ different from $e_0$, iv) $P$ lies outside $\Pi$, and the line $\overline{P ,e_3}$ meets $\Pi$ at a point $Q$ not lying on $S_0$.

In  case i)  by maximality,  $S$ contains a $\M_{6a}$ configuration which case has been treated above.

In  case ii)   by maximality, $S$ contains the configuration $T$ of three concurrent lines $\overline{e_0,e_1}\cup \overline{e_0,e_2}  \cup \overline{e_0,e_3}$.   Here $I_2(T)$ has basis $\{X_3X_2,X_3X_1, X_1X_2\}$ and $V(I_2(T))$ clearly equals $T$. Thus, $T$ is a $\mC_7$ configuration. We denote the class of configurations $G$-equivalent to $T$ by  $\M_{7b}$. The size of $\M_{7b}$  is $|\M_{6e}|/ \tbinom{3}{2}^3$, i.e. $9360$.\\

In  case iii)   by maximality of $S$, we may assume $S$ contains the configuration $T$ of the three lines $L_1 \cup L_2 \cup L_3$ where $L_1=\overline{e_0,e_1}$, $L_2=\overline{e_0,e_2}$ and $L_3= \overline{e_1,e_3}$. Here $I_2(T)$ has basis $\{X_3X_0,X_3X_2, X_1X_2\}$ and $V(I_2(T))$ clearly equals $T$. Thus, $T$ is a $\mC_7$. We denote the class of configurations $G$-equivalent to $T$ by  $\M_{7c}$. The lines $L_2$ and $L_3$ are skew to each other and the line $L_1$ meets both the lines. We can choose two skew lines $L_2$ and $L_3$ in $PG(3,3)$ in $130\cdot 3^4/2$ ways and pick the line  $L_1$ in $(3+1)^2$ ways. Thus, the size of the class $\M_{7c}$ is  $65\cdot 3^4 \cdot 4^2$, i.e. $84240$.\\

In  case iv)   we may assume $S$ contains the configuration $T$ consisting of the lines $L_1=\overline{e_0,e_1}$ and  $L_2=\overline{e_0,e_2}$ in the plane $X_3=0$,  and points $\{e_3, e_3+e_1+e_2\}$. Here $I_2(T)$ has basis $\{X_1(X_3-X_2),X_2(X_3-X_1), X_3X_0\}$ and $V(I_2(T))$ clearly equals $T$. We denote the class of configurations $G$-equivalent to $T$ by  $\M_{7d}$. The number of ways to choose  $T$ is the number of ways to pick the configuration  $\M_{6b}$ multiplied by the number of ways to pick the point $P$ which is $(3-1)(3^2-3)$ (there are $(3^2-3)$ ways to pick the point $Q$ and $(3-1)$ ways to pick the point $P$ on the line $\overline{e_3,Q}$). Thus,  $|\M_{7d}|=|\M_{6b}|\cdot \frac{3(3-1)^2}{2}$, i.e. $505440$.\\

In case $S$ contains the  $\M_{6c}$ configuration represented $L_1=\overline{e_0,e_1}$  and $L_2=\overline{e_2,e_3}$. Given any other point $P$ of $S$, we may assume $P=e_0+e_1+e_2+e_3$. In this case the line $L_3=\overline{e_0+e_1, e_2+e_3}$ also contains $P$, and hence by maximality, $S$ contains $L_1 \cup L_2 \cup L_3$ which is the configuration $\M_{7c}$  discussed above.

\subsubsection*{{\bf Class $\M_{7e}$}}\hfill\\
Next we consider  the case when $S$ contains the  $\M_{6d}$ configuration represented by 
 $S_0=\{P_1, \dots, P_4\}$  $ \cup L$ where $\{P_1, \dots, P_4\}$ are  in general position in a plane $\Pi$ and the line $L$  intersects $\Pi$ in $P_4$. Let $P$ be another point of $S$ and let $T = S_0 \cup \{P\}$.
If $P \in \Pi$, then by maximality, $S$ contains a $\M_{6b}$ configuration, which cases have been treated above.  Similarly,  if the plane $\Pi'$ generated by $P$ and $L$ contains a point of $\{P_1, P_2,P_3\}$, then by maximality, $S$ contains a $\M_{6b}$ configuration,  which case has been treated above. So we assume $\Pi'$ does not contain $P_1, P_2$ or $P_3$. In this case  $\{P_1, P_2, P_3, P\}$ is a $\M_{4b}$ configuration which we may represent as $\{e_0, e_1, e_2, e_3\}$. The line $L$ does not meet any of the $6$ lines $\overline{e_i,e_j}$, where $0\leq i<j \leq 3$. We may assume $L = \overline{e_0+e_1+e_2, e_0+c'e_1+e_3}$ where $c' \notin \{0,1\}$, i.e.  $c'=-1$. Here $I_2(T)$ has basis
\[  \{X_3(X_1+X_0 + X_2) ,  X_0(X_1-X_2+X_3),  X_1(X_0-X_2)+X_3(X_2+X_0)\},\]
 and $V(I_2(T))$ equals $T$. Thus, $T$ is a $\mC_7$. We denote, by $\M_{7e}$, the class of configurations which are $G$-equivalent to $T$. There are $|\M_{4b}|$ ways to choose the  $\M_{4b}$ configuration represented by $\{e_0, e_1, e_2, e_3\}$. Let $\Pi_i$ be the plane spanned by $\{e_0, e_1, e_2, e_3\} \setminus \{e_i\}$ and let $Q_i = \Pi_i \cap L$, $i=0,1,2,3$. Writing $Q_3=e_0+ae_1+be_2$ and $Q_2=e_0+ce_1+de_3$ the condition for the line $L = \overline{Q_1,Q_2}$ to not meet any of the $6$ lines $\overline{e_i,e_j}$ is $abcd \neq 0$ and $a \neq c$. Thus, there are $(3-1)^3(3-2)$ ways to pick $L$. Thus, $|\M_{7e}|=3^6(3^4-1)(3^3-1)(3^2-1)(3-2)/24$, i.e. $1516320$.

\subsubsection*{{\bf Classes $\M_{7f}$ and $\M_{7g}$}}\hfill \\
Next, we consider the case when $S$ contains the  $\M_{6e}$ configuration represented by $S_0=\{e_0, e_0+e_3, e_1, e_1+e_3,e_2, e_2+e_3\}$. Let $P_{2i+1}=e_i$ and  $P_{2i+2} = e_i + e_3$ for $i \in \{0,1,2\}$. Let $P_7$ be another point of $S$ and let $T = \{P, P_1, \dots, P_6\}$. There are $3$ possibilities for $P$: (i) $P_7$ lies on any of the lines $\overline{P_{2i+1},P_{2i+2}}$, $i \in \{0,1,2\}$ , (ii) $P_7$ is contained in any of the $3$ Planes $\Pi_0:\langle e_0,e_1,e_3\rangle$, $\Pi_1:\langle e_0,e_2,e_3\rangle$ and $\Pi_2:\langle e_1,e_2,e_3\rangle$ , (iii) $P_7$ lies in the complement of $\Pi_0 \cup \Pi_1 \cup \Pi_2$.

In case (i) and (ii), by maximality, $S$ contains  $\M_{6b}$,  which cases have already been treated.

In case (iii), we have  $P_7 = ae_0 +be_1+ce_2 +de_3$ with $abc \neq 0$. Here $I_2(T)$ has basis 
\[  \{X_1(aX_2-cX_0), X_0(bX_2-cX_1), bX_3(X_0+X_1+X_2-X_3)-d(a+b+c-d)X_0X_1\}. \]
A direct calculation shows that 
\[ \bar T = V(I_2(T)) =T \cup \{P'\}, \quad \text{where } P'= ae_0 +be_1+ce_2 +(a+b+c-d)e_3. \]
Thus, $T$ is a $\mC_7$ if and only if $2d = a+b+c$. Thus, there  are $4$ choices for $P_7$, namely \{ $(1,b,c, 1+b+c-d) : b,c \in \{\pm 1\}\}  $.  We denote the class of  configuration $G$-equivalent to $T$ by $\M_{7f}$.
In this case, we claim that the $\M_{6e}$ sub-configuration of $T=\M_{7f}$ is unique, in other words removing a point $Q$ from $T$ results in a $\M_{6e}$ configuration if and only if $Q=P_7$: We note that the $6$ points $P_1, \dots, P_6$  represented by  $e_i + \ep_i e_3$ where $i \in \{0,1,2\}$ and $\ep_i \in \{0,1\}$ 
generate $11$ planes of $PG(3,q)$. Of these $11$ planes there are $3$ planes which contain $4$ of $\{P_1, \dots, P_6\}$, these planes denoted $\Pi^i$ are  generated by $\{e_3,e_0,e_1,e_2\} \setminus \{e_i\}$ for $i=0,1,2$.
 The remaining $8$ planes are denoted $\Pi_{\ep}$ where $\ep= (\ep_0, \ep_1, \ep_2) \in \{0,1\}^3$, and are  generated by $\{e_0 + \ep_0 e_3, e_1 + \ep_1 e_3, e_2 + \ep_2 e_3 \}$. 
We recall from the discussion about the automorphism group of the $\M_{6e}$ configuration, that we may assume $P_7 = e_0+e_1+e_2+de_3$ for some $d \in \bF_3$. Since $2d = 1+1+1=0$, we must have $P_7 = e_0+e_1+e_2$. Clearly, of the $11$ planes above, $P_7$ is contained only in the planes $\Pi_{\ep}$ for $\ep=(0,0,0)$ and $\ep=(1,1,1)$ Thus,  $P_7$ lies on exactly two planes of the form $\Pi_\ep$ and $\Pi_{\ep}$. Therefore, none of the $6$ configurations  $\{P_1,\dots, P_7\} \setminus P_i$ for $i \in \{1, \dots, 6\}$ are $\M_{6e}$ configurations, Thus, proving that  $\{P_1, \dots, P_6\}$ is the unique $\M_{6e}$ configuration in $T$. Therefore, the number of $\M_{7f}$ configurations is $|\M_{6e}|\cdot 4$, i.e. $1010880$. We also  note that the automorphism group of $\M_{7f}$ configuration. We recall that the automorphism group of the $\M_{6e}$ configuration is $(Z_2 \times Z_2 \times Z_2)  \rtimes S_3$, where the $S_3$ factor permutes the lines $\overline{e_i,e_3}$ for $i \in \{0,1,2\}$ and the factor $Z_2 \times Z_2 \times Z_2$ has generators $g_0, g_2, g_2$ where $g_i$ interchanges the points $\{e_i, e_i+e_3\}$ for $i= 0,1,2$ respectively. Since $P_7=e_0+e_1+e_2$, the automorphism 
group of the $\M_{7f}$ configuration is $Z_2 \times S_3$, where the $Z_2$ factor is generated by the involution $g_0g_1g_2 \in Z_2 \times Z_2\times Z_2$.

We now consider the case \[ \bar T = V(I_2(T)) =T \cup \{P_8\}, \quad \text{where } P_8= ae_0 +be_1+ce_2 +(a+b+c-d)e_3. \]
The number of such pairs $\{P_7, P_8\}$ is $4$. We denote the class of  configuration $G$-equivalent to $\bar T$ by $\M_{7g}$. As above, we may assume $P, P'$ are given by $e_0+e_1+e_2 \pm  e_3$.
Thus, $P_7$ is on the planes $\Pi_{\ep}$ for $\ep=(1,0,0), (0,1,0), (0,0,1)$ and similarly $P_8$ is on the planes $\Pi_{\ep}$ for $\ep=(0,1,1), (1,0,1), (1,1,0)$. In other words, $P_7$ is on the planes generated by $\{P_2, P_3, P_5\}$, $\{P_1,P_4,P_5\}$, and $\{P_1,P_3,P_6\}$, and similarly $P_8$ is on the planes generated by $\{P_1, P_4, P_6\}$, $ \{P_2,P_3,P_6\}$, and $\{P_2,P_4,P_5\}$. Thus, a list of planes containing $4$ points of the configuration $T'$ is as below. We use the notation $(ijkl)$ to mean that $P_i, P_j,P_k,P_l$ are coplanar:
\[(1234), (1256), (3456), 
(1278), (5678), (3478)(2357), (1457), (1367), (1468),(2368),(2458). \]
Of the $\tbinom{8}{2}=28$ configurations $T_{ij} = \{P_1, \dots, P_8\} \setminus \{P_i, P_j\}$ obtained  from $T'$ by deleting $2$ of the points, we see that there are $16$ configurations of type $\M_{6e}$:
\[T_{12}, T_{14}, T_{16}, T_{17},  T_{23},  T_{25},
T_{28}, T_{34}, T_{36},
T_{37}, T_{45},T_{48}, T_{56}, T_{57}, T_{68}, T_{78},   \]
and there are $12$ configurations of type $\M_{6f}$:
\[T_{13}, T_{15},  T_{18},  T_{24}, T_{26}, T_{27},
T_{35}, T_{38}, T_{46},T_{47},
T_{58}, T_{67}.\]
While there are $4$ ways (choices of $ \{P_7,P_8\}$) to extend a given $\M_{6e}$ configuration to a $\M_{7g}$ configuration, we see that there are $16$ choices for the base $\M_{6e}$ configuration that yield the same $\M_{7e}$ configuration. Therefore, 
the number of $\M_{7g}$ configurations is given by $\frac{| \M_{6e}| \cdot 4}{16}$, i.e. $63180$.\\

Next, we consider the case when $S$ contains the  $\M_{6f}$ configuration represented by \[  S_0 = \{P_1=e_0,P_2= e_1, P_3= e_2, P_4=e_0+e_1+e_2, P_5=e_3, P_6=e_3+e_0-e_1\}. \]

We use the label   $(ijk)$  for the plane generated by $P_i, P_j, P_k$. If $P_l$ also lies on this plane, the label $(ijkl)$ will also denote the same plane.  The $6$ points of $S_0$ generate $14$ planes, of which only the planes $(1234)$ 
and $(1256)$ contain  $4$ points of $S_0$. These are the only planes containing both $P_1$ and $P_2$. Of the  remaining $12$ planes,  there are $8$ of them of the form $(ijk)$  for $i \in \{1,2\}, j \in \{3,4\}, k \in \{5,6\}$, and there are  $4$ planes $(345), (346), (356), (456)$.

We can write $PG(3,3)$ as the union of the $4$ planes containing the line $\overline{P_1,P_2}$ . These are the planes $(1234)$, $(1256)$, $\Pi_1, \Pi_{-1}$ , where $\Pi_1$ is the plane generated by $P_1, P_2, e_2+e_3$ and  $\Pi_{-1}$ is the plane generated by $P_1, P_2, e_2-e_3$.  Let $P_7$ be another point of $S$ and let $T= \{P_7, P_1, \dots, P_6\}$. There are $3$ possibilities for $P_7$: (i) some  $3$ points of $T$ are collinear, (ii) some $5$ points of $T$ are coplanar, (iii) no $3$ points of $T$ are collinear and no $5$ points of $T$ are coplanar.

In case (i) and (ii), by maximality, $S$ contains a $\M_{6b}$ configuration, which cases  which have already been treated. In case (iii) we represent $P_7 = ae_0 +be_1+ce_2 +de_3$.  The condition that no $5$ points of $T$ are coplanar,  implies that $P_7$ is not on the planes $(1234)$ and $(1256)$, i.e.  $cd \neq 0$. Therefore, we take $d=1$ and $c \in \{\pm  1\}$. We recall that the automorphism group of the  $\M_{6f}$ configuration is isomorphic to the group $Z_2 \times D_4$. The element $g_0$ generating the $Z_2$ factor  interchanges the planes $\Pi_1$ and $\Pi_{-1}$, so that we may assume $c=1$. We parametrize the points of the affine plane $\pi_1:=\Pi_1 \setminus \overline{P_1P_2}$ as $xe_0 +ye_1 + (e_2+e_3)$.

The $12$ planes $(ijk)$ (not containing both $P_1, P_2$) meet  $\pi_1$ in the twelve lines:
\[
\begin{split}
    &L_{345}: y=x, \quad   L_{346}: y=-x, \quad L_{356}: y=x+1, \quad L_{456}: y=-x-1.\\
 &L_{135}=L_{146}:y=0, \;L_{145}: y=1, \; L_{136}: y=-1. \\
& L_{235}:x=0, \;L_{245} = L_{236}: x=1, \; L_{246}: x=-1.
 \end{split}
\]
The $D_4$ factor generated by $g_1, g_2, g_3$ preserves $\Pi_1$ and the $13$ points of $\Pi_1$ form orbits under this group. One of these orbits consists of the $4$ points of $\overline{P_1, P_2}$. The remaining $9$ points are the points of the affine plane $\pi_1$. 
The four lines $L_{345}, L_{346}, L_{356}, L_{456}$ 
in $\pi_1$ form a  pair of parallel lines of slope $1$ and another pair of parallel lines of slope $-1$, which we refer to as a $\#$ pattern. These $4$ lines  account for $8$ of the $9$ points of $\pi_1$. The remaining point is $(x,y)=(0,1)$ which forms a single $D_4$-orbit. The four points of intersection of the pattern $\#$ are:
$(x,y) \in \{(0,0),(1,1), (1,-1),(-1,0)\}$ and they form the second orbit. Finally, the $4$ points of the $\#$ pattern which lie on exactly one of the four lines
form the third orbit: $(x,y) \in 
    \{ (0,1), (-1,-1), (0,-1), (-1,1) \}$. We summarize the three $D_4$ orbits on $\pi_1$:
\begin{enumerate}
     \item the four points with coordinates $(x,y) \in \{(0,0),(1,1), (1,-1),(-1,0)\}$ 
     which lie on one of the $4$ lines $\overline{P_iP_j}$ for $i \in \{3,4\}, j \in \{5,6\}$,
    \item the four points with coordinates $(x,y) \in 
    \{ (0,1), (-1,-1), (0,-1), (-1,1) \}$,
     which lie on  the intersection of three planes  \begin{enumerate}
        \item one of which is in the $4$ planes $(345), (346),(356), (456)$ 
        \item one of which is in the $4$ planes $(135), (145),(136), (146)$
        \item one of which is in the $4$ planes $(235), (245),(236), (246)$
    \end{enumerate}
    \item the point with coordinates $(x,y)=(1,0)$  which is  on  the intersection of four planes  \begin{enumerate}
        \item none of which is in the $4$ planes $(345), (346),(356), (456)$ 
        \item two of which are in the $4$ planes $(135), (145),(136), (146)$
        \item two  of which are in the $4$ planes $(235), (245),(236), (246)$.
    \end{enumerate}
\end{enumerate}
The condition that no $3$ points of $T$ are collinear implies that $P_7$ lies in $\pi_1$ and is  either in the second or the third orbit above. Thus, we may assume $P_7$ to be represented by $(x,y)=(0,1)$ or $(x,y)=(1,0)$.  In the former case, $P_7$ lies on the planes $(356),(145),(235)$. In this case, the configuration $T \setminus \{P_6\} = \{P_1,\dots, P_5, P_7\}$ is a $\M_{6e}$ configuration because it consists of the three non-coplanar lines $\overline{P_1P_4},  \overline{P_2P_3}, \overline{P_5P_7}$  which are concurrent through the point $e_1+e_2$. Since we have already accounted for the $\mC_7$ configurations arising from $\M_{6e}$, we skip this case. Therefore, we are left with $P_7$ represented by $e_0+e_2+e_3$.

 Here $I_2(T)$ has basis 
\[  \{-X_3( X_2 +X_1+ X_0 ), X_2(X_0-X_1 -X_3), X_0(X_2-X_3-X_1)\}. \]
Next we calculate $\bar T = V(I_2(T))$. If $Q = (x,y,z,w) \in \bar T$ and $zw=0$, then we just get $Q \in S_0$. So we take $Q=(x,y,z,1)$ with $z \in \{\pm 1\}$. A direct calculation shows that i) if $z=1$ then $Q = P_7$ and ii)  if $z = -1$ then the coordinates
$(x,y,-1,1)$ of $Q$ satisfy the conditions $ (x,y)=(0,-1)$, i.e.,  $Q=P_8 = -e_1 -e_2+e_3 = g_0 P_7$.
The planes containing $4$ points of the configuration
$\{P_1, \dots, P_8\}$ are 
\[ (1234), (1256),(1357),(1467), (2457), (2367),
(1458),(1368),  (2358), (2468), (3478),(5678). \]
Of the $\tbinom{8}{2}=28$  configurations $T_{ij} = \{P_1, \dots, P_8\} \setminus\{P_i, P_j\}$ we have $16$ $\M_{6e}$ configurations:
\[ T_{13},T_{14},T_{15},T_{16}, T_{23},T_{24},T_{25},T_{26},T_{37},T_{38}, T_{47}, T_{48}, T_{57}, T_{58}, T_{67}, T_{68},\]
and $12$ $\M_{6f}$ configurations:
\[ T_{12}, T_{17},T_{18},T_{27},T_{28}, T
_{34}, T_{35}, T_{36}, T_{45}, T_{46}, T_{56}, T_{78}. \]
Thus, $T'$ extends a $\M_{6e}$ configuration, which  has already been treated above (namely $\M_{7g}$ configuration).
\end{proof}

\subsection{$B_{j,7}$ for $j>7$}

\subsubsection*{}
An $\M_{7a}$ configuration $T$ represented by a plane $\Pi$ and a point $P$ outside $\Pi$, contains $14$ points of $PG(3,3)$. Since $j>7$, for $S \in \fC_i(T)$, $i\leq 6$, the only possible cases are $S=\M_{6a}, \M_{6b}$, and    $|\M_{6a}(T)|=1$,  $|\M_{6b}(T)|= \tbinom{13}{2}=78$.  By \eqref{eq:B_j_r_alpha}, we have, \[B_{j,7}(\M_{7a})=\tbinom{14}{j}-\tbinom{13}{j}-78\tbinom{8}{j}=\tbinom{13}{j-1}-78\tbinom{8}{j}.\]

\subsubsection*{}
An $\M_{7b}$ configuration $T$ represented by three concurrent lines not contained in a plane contains $10$ points of $PG(3,3)$. Since $j>7$, for $S \in \fC_i(T)$, $i\leq 6$, the only possible case is $S=\M_{6b}$,  and  $|\M_{6b}(T)|=\tbinom{3}{2}\cdot 3=9$ (as we have $\tbinom{3}{2}$ choices for two lines and $3$ choices for a point from the remaining line). By \eqref{eq:B_j_r_alpha}, we have, \[B_{j,7}(\M_{7b})= \tbinom{10}{j}-9\tbinom{8}{j}.\]

\subsubsection*{}
An $\M_{7c}$ configuration $T$ is represented by three lines $L_1,L_2,L_3$ of $PG(3,3)$, where $L_2$ and $L_3$ are skew to each other and the line $L_1$ meets both the lines.
It contains $10$ points of $PG(3,3)$. Since $j>7$, the only possible cases for $S \in \fC_i(T)$, $i\leq 6$ are $S=\M_{6b}, \M_{6c}$. As there are $2$ pairs of intersecting lines ($\{L_1,L_2\}$ and $\{L_2,L_3\}$) in $T$, and a point can be chosen from the remaining line in $3$ ways, we have $|\M_{6b}(T)|=6$. It is clear that  $|\M_{6c}(T)|=1$, as there is only one configuration of a  pair of skew lines and which is $\{L_1,L_3\}$. By \eqref{eq:B_j_r_alpha}, we have, \[B_{j,7}(\M_{7c})=\tbinom{10}{j}-6\cdot \tbinom{8}{j}-\tbinom{8}{j}= \tbinom{10}{j}-7\cdot \tbinom{8}{j}.\]

\subsubsection*{}
A $\M_{7d}$ configuration $T$ represented by $(\{L_1,L_2\},\{P_1,P_2\})$, where $\{L_1,L_2\}$ is a pair of intersecting lines and the line $\overline{P_1,P_2}$ not contained in the plane does not meet $L_1$ and $L_2$, contains $9$ points of $PG(3,3)$. Since $j>7$, the only possible case for $S \in \fC_i(T)$, $i\leq 6$ is $S=\M_{6b}$. Clearly, $| \M_{6b}(T)|=2$, as there is one pair of intersecting lines and $2$ choices for a point from $\{P_1,P_2\}$. By \eqref{eq:B_j_r_alpha}, we have, \[B_{j,7}(\M_{7d})=\tbinom{9}{j}-2\cdot \tbinom{8}{j}.\]

\subsubsection*{}
Let $T$ be a $\M_{7e}$ configuration represented by $(\{P_1,P_2,P_3,P_4\},L)$, where the points $P_1,P_2,P_3,P_4$ are in general position in $PG(3,3)$ and the line $L$ does not meet any of the $6$ lines $\overline{P_i,P_j}$, where $1\leq i < j \leq 4$. The configuration $T$ has $8$ points of $PG(3,3)$. In this case,  $\fC_i(T) =\varnothing$ for  $i\leq 6$. By \eqref{eq:B_j_r_alpha}, we have \[B_{8,7}(\M_{7e})=1.\]

\subsubsection*{}
Let $T$ be a $\M_{7g}$ configuration represented by $\{P_1, \dots, P_6,P_7,P_8\}$ where the $4$ lines $L_i = \overline{P_{2i-1}, P_{2i}}$, $i=1,2,3,4$, meet at a single point $Q$ different from $\{P_1, \dots, P_6,P_7,P_8\}$ and no three of them are contained in a plane. The configuration $T$ has  $8$ points of $PG(3,3)$. Since $j>7$, we have no such $S$ with $S \in \fC_i(T)$, $i\leq 6$. By \eqref{eq:B_j_r_alpha}, we have, \[B_{8,7}(\M_{7g})=1.\]

Since a $\M_{7f}$ configuration contains exactly $7$ points $PG(3,3)$ and $j>7$, we get $B_{j,7}(\M_{7f})=0$. Therefore, by \eqref{eq:B_j_r}, for $j>7$, we have

\begin{equation*}
\begin{split}
      B_{8,7}&=1080 \cdot (\tbinom{13}{7}-78\tbinom{8}{8})+9360\cdot (\tbinom{10}{8}-9\tbinom{8}{8})\\
    &+84240 \cdot (\tbinom{10}{8}-7\cdot \tbinom{8}{8})+ 505440\cdot \tbinom{9}{8}-2\cdot \tbinom{8}{8}+1516320+63180,
\end{split}
\end{equation*}
and, if $j\geq 9$,

\begin{equation*}
      B_{j,7}=1080 \cdot (\tbinom{13}{j-1})+9360\cdot \tbinom{10}{j}
    +84240 \cdot \tbinom{10}{j}+ 505440\cdot \tbinom{9}{j},
\end{equation*}
 i.e.
\begin{equation}\label{BJ7}
B_{8,7}=9413820, \text{ and } j\geq 9, B_{j,7}=1080 \cdot \tbinom{13}{j-1}+93600 \cdot \tbinom{10}{j}+ 505440\cdot \tbinom{9}{j}.
\end{equation}

\section{  Configurations of rank $8$ }
\label{r8}
As before, we build the $\mC_8$'s by adding points to $\mC_7$'s. We carry this out systematically by running through the classes in $\fC_7$.
If $T$ is a $\mC_8$, then $I_2(T)$ is a $2$ dimensional vectors space, and its projectivization is a pencil of quadratic forms on $PG(3,3)$. If $S$ denote the set of $(q+1)=4$ elements of this pencil, let $\nu_i(T)=|S \cap \mO_i|$, here $\mO_1, \dots, \mO_6$ are the $G$-orbits of quadratic forms on $PG(3,3)$, listed in Table \ref{tab:table1}.  We will write down the numbers $\nu(T)=(\nu_1(T), \dots, \nu_6(T))$.
\begin{lemma} \label{max8}
The set $\fC_8$ can be partitioned into the following $4$ classes:  
\begin{enumerate}
\item [$\M_{8a}$] of size $2^3 \cdot 3^2 \cdot 5 \cdot 13$. This class consists of a plane of $PG(3,3)$ and a line not contained in that plane.
\item [$\M_{8b}$] of size 
$2^4 \cdot 3^2 \cdot 5 \cdot 13$.  This class consists of $4$ concurrent lines such that no three are contained in a plane.

\item [$\M_{8c}$] of size 
 $2^2 \cdot 3^6 \cdot 5 \cdot 13$. This class consists of $4$ lines $L_1,L_2,L_3, L_4$ such that $L_i$ and $L_j$ intersect if and only if $i \neq j \mod 2$ (and $L_i$ and $L_j$ are skew if and only if $i=j \mod 2$).\\

\item [$\M_{8d}$] of size $2^6 \cdot 3^4 \cdot 5 \cdot 13$. This class consists of $(\{L_1,L_2\},\{P_1,P_2,P_3\})$ where $\{ L_1 ,L_2\}$ is a pair of lines in a plane $\pi_1$, and the points $P_1, P_2, P_3$ are in a different plane $\pi_2$. The point $Q = L_1\cap L_2$ lies on the line $L=\pi_1 \cap \pi_2$, but none of the points $P_1, P_2, P_3$ lie on $L$. The four points $\{Q, P_1, P_2, P_3\}$ are in general position in $\pi_2$. \\

\item [$\M_{8e}$] of size  $2^3 \cdot 3^6 \cdot 5 \cdot 13$. This class consists of configurations $\{P_1, \dots, P_6,P_7, P_8\}$ where the $3$ lines $L_i = \overline{P_{2i-1} ,P_{2i}}$, $i=1,2,3$, are non-coplanar lines meeting at a single point $Q$ different from $\{P_1, \dots, P_6\}$.  There are four lines $L$ passing through $Q$ with the property that no three of the four lines $\{L_1,L_2,L_3,L\}$ are coplanar.  Let $L_4, L_5$ be two such lines. On each of these lines $\{L_4, L_5\}$ there is a unique point $\{Q_1(L_4), Q_1(L_5)\}$ as given in the description of the  $\M_{7f}$ configuration (see Lemma  \ref{list7}). We take $\{P_7, P_8\}=\{Q_1(L_4), Q_1(L_5)\}$.

\end{enumerate}
\end{lemma}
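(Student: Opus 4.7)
The plan is to continue the recursive construction used in Lemmas \ref{r-6} and \ref{list7}: for each class $\M_{7\alpha}$, $\alpha \in \{a,b,\dots,g\}$, I pick a representative configuration $S_0$ and consider all configurations of the form $T = S_0 \cup \{P\}$ for a point $P \notin S_0$. I discard the cases where the rank of $T$ is still $7$ (which would force $P \in \bar{S_0}$, impossible as $S_0$ is a $\mC_7$) and the cases where $T$ contains a $\mC_k$ configuration $(k \leq 7)$ of a type already treated while running through an earlier class of $\fC_7$. For the surviving cases, I compute a basis of $I_2(T)$ (a two-dimensional space of quadratic forms) and check $V(I_2(T)) = T$; if so, $T$ is indeed a $\mC_8$.

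In detail, the case analysis proceeds as follows. Extending an $\M_{7a}$ configuration (plane $\Pi$ with extra point $e_3$) by a point $P$ off $\Pi$: the line $\overline{e_3,P}$ meets $\Pi$, so by maximality $T$ contains that whole line, yielding class $\M_{8a}$. Extending $\M_{7b}$ (three concurrent non-coplanar lines through $e_0$) by a point $P$ not lying on the three associated planes produces the $4$-concurrent-lines class $\M_{8b}$. Extending $\M_{7c}$ (lines $L_1,L_2,L_3$ with $L_1$ meeting both skew lines $L_2,L_3$) by a point $P$ off the two planes $\langle L_1,L_2\rangle, \langle L_1,L_3\rangle$ produces, after maximality forces a fourth line, the class $\M_{8c}$ (a quadrilateral of four lines alternately meeting). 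Extending $\M_{7d}$ (two intersecting lines and two external points) by a suitable third point yields $\M_{8d}$; here one verifies that $Q = L_1 \cap L_2$ together with the three points in the second plane $\pi_2$ lies in general position on $\pi_2$. The remaining extensions from $\M_{7e}, \M_{7f}, \M_{7g}$ must, by a similar case check, yield the class $\M_{8e}$ (or a previously-treated $\mC_8$), and this is where the analysis is delicate.

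The principal obstacle is the bookkeeping needed to guarantee completeness \emph{and} non-redundancy in Step 1 while simultaneously computing sizes correctly. For each new class, the size is obtained either by picking a representative $T$ and using the geometric description (e.g. counting pairs (plane, line not in plane) for $\M_{8a}$, quadruples of concurrent non-coplanar lines for $\M_{8b}$, etc.) or by writing $|\M_{8\alpha}| = \sum_\beta |\M_{7\beta}| \cdot n_{\alpha\beta} \,/\, m_\alpha$, where $n_{\alpha\beta}$ is the number of points $P \in PG(3,3) \setminus S_0$ whose adjunction produces a $\M_{8\alpha}$ from an $\M_{7\beta}$ representative, and $m_\alpha$ is the number of $\M_{7\beta}$ subconfigurations contained in a single $\M_{8\alpha}$ (which is read off from the stabilizer data, already made available in the proofs of Lemmas \ref{r-6} and \ref{list7}). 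The most delicate sub-step will be treating $\M_{7f}$ and $\M_{7g}$, because their automorphism groups (computed earlier as $Z_2 \times S_3$ and related dihedral groups) determine the number of $\M_{7\beta}$-subconfigurations in a given $\M_{8e}$; getting this multiplicity right is precisely what gives the stated size $2^3 \cdot 3^6 \cdot 5 \cdot 13$ of $\M_{8e}$.

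Once all cases are exhausted, one finally records the pencil data $\nu(T)$ for each class: $I_2(T)$ is $2$-dimensional, so its projectivization has $q+1 = 4$ points, which are partitioned among the six orbits $\mO_1,\dots,\mO_6$ of Table \ref{tab:table1}. For each representative $T$ this is a short direct computation from the basis of $I_2(T)$ produced in the existence step, and it provides an independent consistency check that no two classes $\M_{8\alpha}, \M_{8\alpha'}$ are $G$-equivalent.
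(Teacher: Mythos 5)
Your plan is the same as the paper's: build each $\mC_8$ by adjoining a point to a representative of each class in $\fC_7$, discard extensions that contain an already-treated configuration, certify the survivors by exhibiting a basis of $I_2(T)$ and checking $V(I_2(T))=T$, and count each class either geometrically or via the ratio $|\M_{7\beta}|\cdot n_{\alpha\beta}/m_\alpha$. The classes $\M_{8a}$--$\M_{8d}$ and their sizes are handled essentially as in the paper.

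However, for a classification lemma the substance is precisely the case analysis, and your proposal defers the decisive part. Concretely, three things are asserted but not established. First, you must show that \emph{every} extension of an $\M_{7e}$ or $\M_{7g}$ configuration reduces to an earlier case (in the paper: any new point added to $\M_{7e}$ lands on one of the four planes through $\overline{P_1,P_2}$, forcing a $\M_{7d}$; and the six planes carrying four points of an $\M_{7g}$ cover all of $PG(3,3)$, so any new point forces a $\M_{5b}$ and hence earlier classes). Without this, completeness of the list of five classes is not proved. Second, for the $\M_{7f}$ extension you need the explicit count that exactly $3$ of the $40$ points of $PG(3,3)$ survive the exclusions (the five $4$-point planes, the $21$ secant lines, and the line $\overline{e_3,e_0+e_1+e_2}$ together cover $37$ points), and that these $3$ points form a single orbit under the automorphism group $Z_2\times S_3$ of $\M_{7f}$, so that a single representative $T$ suffices and $V(I_2(T))=T$ can be verified. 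Third, the multiplicity $m$ giving $|\M_{8e}|=3|\M_{7f}|/m$ is not ``read off from the stabilizer data'' in the paper; it comes from the separate observation that all $8$ one-point deletions of an $\M_{8e}$ are rank-$7$ maximal configurations with $7$ points containing no line, and $\M_{7f}$ is the unique such class, whence $m=8$ and $|\M_{8e}|=2^3\cdot 3^6\cdot 5\cdot 13$. Your suggestion to use the pencil invariants $\nu(T)$ to separate the classes is a sound consistency check (the paper records $\nu$ for each class and the five vectors are pairwise distinct), but it does not substitute for the missing exhaustion argument.
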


\begin{proof}
Let $S$ be a $\mC_8$  configuration. 

\subsubsection*{{\bf Class $\M_{8a}$}}\hfill \\
Let $S$ contain a $\M_{7a}$ configuration represented by the point set $T = \{e_3\} \cup \Pi$, where the plane $\Pi$ given by $X_3=0$. If $S$ contains a point $P\notin T$, then $P$ will be on a line $\overline{e_3,Q}$, where $Q$ is a point of $\Pi$. Then by maximality, $\overline{e_3,P}$ is contained in $S$. We can assume $P=e_0+e_1+e_3$ and let $T^\prime = \overline{e_3,e_0+e_1+e_3} \cup \Pi$. Here $I_2(T^\prime)$ has basis $\{X_3(X_0-X_1), X_3X_2\}$ and $V(I_2(T^\prime))$ clearly equals $T^\prime$. Thus, $T^\prime$ is a $\mC_8$ which we denote $\M_{8a}$. Clearly, all the $(q+1) =4$ elements of the pencil $\bP(I_2(T'))$ are in $\mO_1$ (pairs of planes). Thus, $\nu(T')=(4,0,0,0,0,0)$.
The number of ways to choose the plane $\Pi$ is $40$ and a line not contained in $\Pi$ is $13\cdot 3^2$. Therefore, the size of $\M_{8a}$  is $40\cdot 13\cdot 3^2= 4680$.

\subsubsection*{{\bf Class $\M_{8b}$}}\hfill \\
Let $S$ contain a $\M_{7b}$ configuration represented by $S_0=\overline{e_0,e_1}\cup \overline{e_0,e_2} \cup \overline{e_0,e_3}$. Let $P$ be  another point of $S$. There are two possibilities for $P$: (i) $P$ lies on one of the planes $\Pi_1:X_1=0$, $\Pi_2:X_2=0$, $\Pi_3:X_3=0$, and (ii) $P$ lies outside $\Pi_1,\Pi_2$ and $\Pi_3$.

In case (i) by maximality, $S$ contains a $\M_{7a}$ configuration which case has been treated above.

In case (ii), if $P=ae_0+be_1+ce_2+de_3$, then the condition that $P$ does not lie on the planes $\Pi_1,\Pi_2$ and $\Pi_3$ is that $bcd\neq 0$. So we can assume $d=1$ and $P=ae_0+be_1+ce_2+e_3$. Let $T^\prime = \overline{e_0,e_1}\cup \overline{e_0,e_2} \cup \overline{e_0,e_3} \cup \{P\}$ . Here $I_2(T^\prime)$ has basis $\{X_3(cX_1-bX_2), X_2(X_1-bX_3)\}$ and $V(I_2(T^\prime))$ equals $T=\overline{e_0,e_1}\cup \overline{e_0,e_2} \cup \overline{e_0,e_3} \cup \overline{e_0,P} $. Thus, $T$ is a $\mC_8$ which we denote $\M_{8b}$. 
In the pencil $\bP(I_2(T))$, apart from the mentioned generators, the remaining $(q-1)=2$ elements are in $\mO_4$. Thus, $\nu(T)=(2,0,0,2,0,0)$. 

The number of ways a $\M_{7b}$ configuration $S_0$ can be chosen is $|\M_{7b}|$. Since no three lines of a $\M_{8b}$ configuration are contained in a plane, there are $4$ choices for the line $\overline{e_0,P}$. The configuration $T\setminus \overline{e_0,e_i}$, $i=1,2,3$ is $\M_{7b}$. Thus, there are $4$ different $\M_{7b}$ configurations contained in a $\M_{8b}$ configuration. Therefore, the size of $\M_{8b}$ is $\frac{|\M_{7b}|\cdot 4}{4}=|\M_{7b}|=9360$.

\subsubsection*{{\bf Class $\M_{8c}$}}\hfill \\
Let $S$ contain a $\M_{7c}$ configuration represented by $S_0=\overline{e_0,e_1}\cup \overline{e_0,e_2}\cup \overline{e_1,e_3}$.  Let $P$ be another point of $S$. There are two possibilities for $P$: (i) $P$ lies on one of the planes $\Pi_1:X_3=0$, $\Pi_2:X_2=0$, and (ii) $P$ lies outside $\Pi_1,\Pi_2$. In case (i)  by maximality,  $S$ contains a $\M_{7a}$ configuration which case has been treated above. In case (ii), if $P=ae_0+be_1+ce_2+de_3$, then the condition that $P$ does not lie on the planes $\Pi_1,\Pi_2$ is that $cd\neq 0$. Switching to the basis $f_0=e_0, f_1=e_1, f_2=ce_2+ae_0, f_3=de_3+be_1$ of $PG(3,3)$, the configuration $S_0$ is again given by $S_0=\overline{f_0,f_1}\cup \overline{f_0,f_2}\cup \overline{f_1,f_3}$, and $P = f_2+f_3$. Thus, we may take $T'=S_0 \cup \{P\}$ to be  $T^\prime = \overline{e_0,e_1}\cup \overline{e_0,e_2} \cup \overline{e_1,e_3} \cup \{e_2+e_3\}$ . Here $I_2(T^\prime)$ has basis $\{ X_3 X_0, X_2X_1\}$ and $V(I_2(T^\prime))$ equals $T=\overline{e_0,e_1}\cup \overline{e_0,e_2} \cup \overline{e_1,e_3} \cup \overline{e_2,e_3}$. Thus, $T$ is a $\mC_8$ which we denote $\M_{8c}$. In the pencil $\bP(I_2(T))$, apart from the mentioned generators, the remaining $(q-1)=2$ elements are in $\mO_2$. Thus, $\nu(T)=(2,2,0,0,0,0)$.
The size of $\M_{8c}$ is $3 \cdot |\M_{4b}| = 189540$, because there are $3$ ways to obtain the configuration $T$ from four points in general position of $PG(3,3)$.


\subsubsection*{{\bf Class $\M_{8d}$}}\hfill \\
Let $S$ contain a $\M_{7d}$ configuration represented by $S_0=\overline{e_0,e_1}\cup \overline{e_0,e_2}\cup \{e_3, e_3+e_1+e_2\}$. If $P$ is another point of $S$, the there are $4$ possibilities for $P$: (i) $P$ lies in the plane $\Pi_0:\langle e_0,e_1,e_2\rangle$, (ii) $P$ lies in at least one of the planes $\Pi_1:\langle e_3,e_0,e_1\rangle$,  $\Pi_2:\langle e_3,e_0,e_2\rangle$, $\Pi_3:\langle e_1+e_2+e_3,e_0,e_1\rangle$, $\Pi_4:\langle e_1+e_2+e_3,e_0,e_2\rangle$, (iii) $P$ lies on the line $L:\overline{e_3,e_1+e_2+e_3}$, (iv) $P$ lies in the complement of $\Pi_0\cup \Pi_1\cup \Pi_2\cup \Pi_3\cup \Pi_4\cup L$ in $PG(3,3)$.

In the cases (i) and (iii) by maximality, $S$ contains a $\M_{7a}$ configuration, which case has been treated above.

In the case (ii) by maximality, $S$ contains a $\M_{7b}$ or $\M_{7c}$ configuration, which case has been treated above.

In case (iv), let $P=ae_0+be_1+ce_2+de_3$. The condition for $P\notin \Pi_0$ is that $d\neq 0$. So we can assume $d=1$. The conditions for $P\notin \Pi_1, \Pi_2,\Pi_3,\Pi_4$ are $c\neq 0,1$ and $b\neq 0,1$.  Therefore $b=c=-1$. The condition for $P$ to lie on the line $L$ is $a=b-c=0$ . Thus, we may take  $P$ to be  $ae_0-e_1-e_2+e_3$  with  $a\in \{ \pm 1\}$. Let $T^\prime =\overline{e_0,e_1}\cup \overline{e_0,e_2}\cup \{e_3, e_3+e_1+e_2, P\}$. Here $I_2(T^\prime)$ has basis  
\[ \{aX_1(X_3-X_2)-X_0X_3, X_3X_1+X_2X_1+X_3X_2\},\]
and $V(I_2(T^\prime))$ equals $T^\prime$. Thus,  $T^\prime$ is a $\mC_8$ which we denote $\M_{8d}$.  Of the $(q+1)=4$ points of the pencil $\bP(I_2(T'))$,  except for the element   $X_3X_1+X_2X_1+X_3X_2$ 
 which is in $\mO_4$,  the remaining $q=3$ elements are in $\mO_2$ . Thus, $\nu(T')=(0,3,0,1,0,0)$.

We will now count the size of $\M_{8d}$. This configuration consists of a pair of intersecting lines $L_1,L_2$ in a plane $\Pi$, and three non-collinear points $P_1,P_2,P_3$ lying outside $\Pi$,  such that the $3$ lines formed by $P_1,P_2,P_3$ do not meet $L_1,L_2$. Since  $q=3$,  with this condition, the plane $\langle P_1,P_2,P_3\rangle$ meets $\Pi$ in a line $M$ through $L_1\cap L_2$. A pair of intersecting lines $L_1,L_2$ in $PG(3,3)$ can be chosen in $\M_{5a}$ ways, the line $M$ can be chosen in $\tbinom{2}{1}$ ways in the plane $\langle L_1,L_2\rangle$, a plane $\Pi'$ through $M$ other than $\Pi$ can be chosen in $\tbinom{3}{2}$ ways and on $\Pi'$ a set of $3$ noncollinear points with the property no two are lying on a line through $L_1\cap L_2$, can be chosen in $(3^3-3^2)$ ways. Hence, the size of $\M_{8d}$ is $|\M_{5a}|\cdot \tbinom{2}{1}\cdot \tbinom{3}{2} \cdot (3^3-3^2)$, i.e. $336960$.\\

Let $S$ contain a $\M_{7e}$ configuration represented by $S_0=\{e_0, e_1, e_2, e_3\}\cup \overline{e_0+e_1+e_2, e_0-e_1+e_3}$. Let us denote $P_1=e_0+e_1+e_2$ and $P_2=e_0-e_1+e_3$. All the points of $PG(3,q)$ are on the $4$ planes $\Pi_0:\langle P_1,P_2,e_0\rangle$, $\Pi_1:\langle P_1,P_2,e_1\rangle$, $\Pi_2:\langle P_1,P_2,e_2\rangle$, $\Pi_3:\langle P_1,P_2,e_3\rangle$, through $\overline{P_1,P_2}$. Let $P$ be another point of $S$. Then $P$ will lie on one of these $4$ planes, say $\Pi_0$. In this case, the plane $\Pi_0$ contains $5$ points of $S$, and by maximality, $S$ contains a $\M_{7d}$ configuration, which case has been treated above.

\subsubsection*{{\bf Class $\M_{8e}$}}\hfill \\
Let $S$ contain a $\M_{7f}$ configuration represented by $S_0=\{e_0,e_0+e_3,e_1,e_1+e_3,e_2,e_2+e_3, e_0 +e_1+e_2 \}$.  Here,  $I_2(S_0)$ has a basis \[  \{X_1(X_2-X_0), X_0(X_2-X_1), X_3(X_0+X_1+X_2-X_3)\}. \]

Let $P'=ae_0+be_1+ce_2+de_3$ be another point of $S$. Then there are three possibilities for $P'$: (i) $P'$ lies on one of the planes $\Pi_0:\langle e_0,e_1,e_3\rangle$, $\Pi_1:\langle e_0,e_2,e_3\rangle$ and $\Pi_2:\langle e_1,e_2,e_3\rangle$, $\Pi_3:\langle e_0,e_1,e_2\rangle$, $\Pi_4:\langle e_0+e_3,e_1+e_3,e_2+e_3\rangle$ containing $4$ points of $S_0$, (ii) $P'$ lies on one of the 21 lines containing two points of $S_0$, (iii) $P'$ lies on the line $\overline{e_3,e_0+e_1+e_2}$, (iv) $P'$ lies in the complement of these planes and lines in $PG(3,3)$. 

In cases (i) and (ii), by maximality, $S$ contains a $\M_{6b}$ configuration. From a $\M_{6b}$ configuration, we have seen earlier that a $\M_{7b}$, $\M_{7c}$ or $\M_{7d}$ configuration can be obtained, and these cases have been treated above. 

In case (iii), if $P'$ lies on the line $\overline{e_3,e_0+e_1+e_2}$, then $P'$ is either $e_0+e_1+e_2+e_3$ or $-e_0-e_1-e_2-e_3$. Thus,  the $8$ points of $S_0\cup \{P'\}$ lies on $4$ concurrent lines $\overline{e_3e_0}$, $\overline{e_3e_1}$, $\overline{e_3e_2}$, $\overline{e_3e_0+e_1+e_2}$. But these $8$ points of $S_0\cup \{P'\}$ do not satisfy the condition to be a $\M_{7g}$ configuration. So, the configuration consisting of points of $S_0\cup \{P'\}$ is of rank $8$ and contained in a $\M_{8b}$ configuration represented by the $4$ concurrent lines $\overline{e_3e_0}$, $\overline{e_3e_1}$, $\overline{e_3e_2}$, $\overline{e_3e_0+e_1+e_2}$. By Lemma \ref{maxl_uniq}, $S_0\cup \{P'\}$ is not a maximal configuration and $S$ is a $\M_{8b}$ configuration.

Now we will consider case (iv). The number of points contained in the union $\Pi_0\cup \Pi_1\cup \Pi_2\cup \Pi_3\cup \Pi_4$ is $35$. All the $21$ lines formed by the $2$
 points of $S_0$ are contained in these $5$ planes. Also, there are two points on the line $\overline{e_3,e_0+e_1+e_2}$ which are not in the union of these $5$ planes and $21$ lines. Thus, there are total $37$ points of $PG(3,3)$ covered by union of the planes $\Pi_0, \Pi_1, \Pi_2,\Pi_3,\Pi_4$, and $\overline{e_3,e_0+e_1+e_2}$.  The remaining   $3$  points 
 $\{e_0-e_1-e_2+e_3,-e_0+e_1-e_2+e_3,-e_0-e_1+e_2+e_3\}$ which form a single orbit under the automorphism group $Z_2 \times S_3$ of the $\M_{7f}$ configuration. So we may take $P'= e_0-e_1-e_2+e_3$.  Let $T=S_0\cup \{P'\}$. A basis of the  $I_2(T)$ is given by 
\[ \{X_1(X_2-X_0)+X_3(X_0+X_1+X_2-X_3), X_0(X_2-X_1)\},\] and it is readily checked that $V(I_2(T))$ equals $T$. Thus, $T$ is a maximal $\mC_8$, which we denote as $\M_{8e}$. The configuration $T_i = T \setminus \{P_i\}$ for $i=1, \dots ,8$ are $\mC_7$ configurations with $7$ points and containing no lines. In the list $\M_{7a}-\M_{7g}$, the only configuration with $7$ points and containing no lines is $\M_{7f}$.
Therefore, the size of the class of $\M_{8e}$ configurations  is:
\[ |\M_{8e}| = \tfrac{1}{8} |\M_{7f}| \cdot 3 = |G|/32=379080.\]

In the pencil $\bP(I_2(T))$, there is one element each in $\mO_1$ and $\mO_2$  and the remaining $2$ elements are in $\mO_6$.  Thus, $\nu(T)=(1,1,0,0,0,2)$.\\

Let $S$ contain a $\M_{7g}$ configuration represented by $S_0=\{e_0,e_0+e_3,e_1,e_1+e_3,e_2,e_2+e_3, e_0+e_1-e_2, e_0+e_1-e_2+e_3\}$. Let us consider the six planes $\Pi_0:\langle e_0,e_1,e_3\rangle$, $\Pi_1:\langle e_2,e_0+e_1-e_2,e_3\rangle$, $\Pi_2:\langle e_0,e_2,e_3\rangle$, $\Pi_3:\langle e_0,e_0+e_3,e_0+e_1-e_2\rangle$, $\Pi_4:\langle e_1,e_2,e_3\rangle$, and $\Pi_5:\langle e_1,e_1+e_3,e_0+e_1-e_2\rangle$. Each of the plane $\Pi_i,i\in\{0,\dots,5\}$, contain $4$ points of $S_0$. Now,

\begin{align*}
    |\cup_{i=0}^{5} \Pi_i|&=|\Pi_0|+|\Pi_1\setminus \Pi_0|+|\Pi_2\setminus(\Pi_0\cup\Pi_1)|+|\Pi_3\setminus(\cup_{i=0}^{2}\Pi_i)|+|\Pi_4\setminus(\cup_{i=0}^{3}\Pi_i)|+|\Pi_5\setminus(\cup_{i=0}^{4}\Pi_i)|\\
    &=13+9+6+6+3+3=40.
\end{align*}

This shows that the union $\cup_{i=0}^{5} \Pi_i$ contains all the points of $PG(3,3)$. If $S$ contains another point $P$, then $P$ will be on at least one of the planes $\Pi_i$ and since $\Pi_i$ contains $5$ points of $S$, by maximality, it will contain a $\M_{5b}$ configuration. From a $\M_{5b}$ configuration we only get a$\M_{6b}$ configuration and from a $\M_{6b}$ configuration we can get a $\M_{7b}$, a $\M_{7c}$ or a $\M_{7d}$ configuration. So, this case has been treated above.
\end{proof}

\subsection{$B_{j,8}$ for $j>8$}
\subsubsection*{}

Let $T$ be a $\M_{8a}$ configuration of $16$ points  represented by a plane $\Pi$ of $PG(3,3)$ and a line $L$ not contained in $\Pi$. Let $L\cap \Pi=\{Q\}$. Since $j>8$, the possible cases for $S \in \fC_i(T)$, $i\leq 7$ are  as follows:
\begin{enumerate}
    \item[(i)] $S=\M_{7a}$ consisting of a plane $\pi$ and a point $P$ not on $\pi$. In this case $|\M_{7a}(T)|=3$, as we must take $\pi = \Pi$ and there  are $3$ ways to pick $P$ from $L\setminus Q$.
    \item[(ii)]  $S=\M_{7b}$ consisting of $3$ non-coplanar lines $\{\ell_1, \ell_2,\ell_3\}$ through a point $P$. In this case, $|\M_{7b}(T)|=6$, as we must take $P=Q$, $\ell_1 = L$, and there are $\tbinom{4}{2}=6$ choices for $\{\ell_2, \ell_3\}$ in $\Pi$.
    \item[(iii)]  $S=\M_{7c}$ consisting of three lines $\{\ell_1, \ell_2,\ell_3\}$ where $\ell_2$  and $\ell_3$ are skew to each other and the line $\ell_1$  meets both the lines.   We must take $\ell_3=L$, $\ell_2$ to be a line in $\Pi$ not passing through $Q$, and $\ell_1$ a line in $\Pi$ passing through $P$. Thus, there are $1$, $q^2=9$ and $(q+1)=4$ choices for $\ell_1, \ell_2$ and $\ell_3$ respectively. Therefore, $|\M_{7c}(T)|=36$.
    \item[(iv)]  $S=\M_{7d}$ which consists of  a pair of  lines $\ell, \ell'$ in a plane $\pi$ , and a pair of points $P,P'$  not lying on $\pi$ such that the line $\overline{P, P'}$  meets $\pi$ in a point not lying on $\ell \cup \ell'$.  In this case $|\M_{7d}(T)|=108$, as there are $\tbinom{9}{2}=36$ choices for the  pair $\ell, \ell'$ of lines not passing through $Q$ and $\tbinom{3}{2}=3$ choices for the $2$ points $P, P'$  from $L\setminus Q$.
    \item[(v)]  $S=\M_{6a}$ consisting of a plane $\pi$. In this case  we must take $\pi = \Pi$ and hence $|\M_{6a}(T)|=1$.
\end{enumerate}

By \eqref{eq:B_j_r_alpha}, we have, \[B_{j,8}(\M_{8a})=\tbinom{16}{j}-3\cdot (\tbinom{14}{j}-\tbinom{13}{j})-6\cdot \tbinom{10}{j}-36\cdot \tbinom{10}{j}-108\cdot \tbinom{9}{j}-\tbinom{13}{j},\]
i.e. 
\[B_{j,8}(\M_{8a})=\tbinom{16}{j}-3\cdot \tbinom{13}{j-1}-42\cdot \tbinom{10}{j}-108\cdot \tbinom{9}{j}-\tbinom{13}{j}.\]

\subsubsection*{}
A $\M_{8b}$ configuration $T$ of $13$ points represented by $4$ concurrent lines $L_1, \dots, L_4$ through a point $Q$, such that no three of these lines are coplanar. Since $j>8$, the possible cases for $S \in \fC_i(T)$, $i\leq 7$ are  listed as follows.
\begin{enumerate}
    \item[(i)]  $S=\M_{7b}$ consisting of three non-planar concurrent lines. In this case $|\M_{7b}(T)|=4$, as there $\tbinom{4}{3}$ ways to choose $3$ concurrent lines in $T$.
    \item[(ii)]  $S=\M_{7d}$ which consists of  a pair of  lines $L, L'$ in a plane $\pi$, and a pair of points $P,P'$  not lying on $\pi$ such that the line $\overline{P, P'}$  meets $\pi$ in a point not lying on $L \cup L'$. In this case $|\M_{7d}(T)|=\tbinom{4}{2}\cdot 3\cdot 3=54$, as there are $\tbinom{4}{2}$ ways to choose to pick $L, L'$ from $\{L_1, \dots, L_4\}$, and $9$ ways to pick a point each (different from $Q$) from the remaining two lines of $\{L_1, \dots , L_4\} \setminus \{L, L'\}$.
    \end{enumerate}
By \eqref{eq:B_j_r_alpha}, we have, \[B_{j,8}(\M_{8b})=\tbinom{13}{j}-4\cdot \tbinom{10}{j}-54\cdot \tbinom{9}{j}.\]

\subsubsection*{}
Let $T$ be a $\M_{8c}$ configuration of $13$ points  represented by $4$ lines $L_1,L_2,L_3, L_4$ such that $L_i$ meets $L_j$ if and only if $i \neq j \mod 2$ .
Since $j>8$, the possible cases for $S \in \fC_i(T)$, $i\leq 7$ are   as follows.
\begin{enumerate}
    \item[(i)]  $S=\M_{7c}$  consisting of three lines $\{\ell_1, \ell_2,\ell_3\}$ where $\ell_2$  and $\ell_3$ are skew to each other and the line $\ell_1$  meets both the lines.  In this case $|\M_{7c}(T)|=4$, as we must take $S$ to be the union of any $3$ of the $4$ lines $\{L_1, \dots, L_4\}$ . 
    \item[(ii)]  $S=\M_{7d}$ which consists of  a pair of  lines $L, L'$ in a plane $\pi$, and a pair of points $P,P'$  not lying on $\pi$ such that the line $\overline{P, P'}$  meets $\pi$ in a point not lying on $L \cup L'$. In this case, we must take $L, L'$ to a pair $L_i, L_j$ with $i \neq j \mod 4$ (of which there are $4$ choices). If $\{L,L'\} = \{L_1, L_2\}$ then there are $2$ choices of points  on  each of the lines $L_3, L_4$ for the pair $P, P'$.
    Thus, $|\M_{7d}(T)|=4\cdot 2\cdot 2=16$
\end{enumerate}
By \eqref{eq:B_j_r_alpha}, we have, \[B_{j,8}(\M_{8c})=\tbinom{12}{j}-4\cdot \tbinom{10}{j}-16\cdot \tbinom{9}{j}.\]

\subsubsection*{}
Let $T$ be a $\M_{8d}$ configuration of $10$ points, represented by $(L_1,L_2,\{P_1,P_2,P_3\})$ where $\{ L_1 ,L_2\}$ is a pair of lines in a plane $\pi_1$, and the points $P_1, P_2, P_3$ are in a different plane $\pi_2$. The point $Q = L_1\cap L_2$ lies on the line $L=\pi_1 \cap \pi_2$, but none of the points $P_1, P_2, P_3$ lie on $L$. The four points $\{Q, P_1, P_2, P_3\}$ are in general position in $\pi_2$.  Since $j>8$, the possible case for $S \in \fC_i(T)$, $i\leq 7$ is $S=\M_{7d}$. In this case $|\M_{7d}(T)|=3$, as we must take $S = T \setminus \{P_i\}$ for $i = 1\dots 3$. By \eqref{eq:B_j_r_alpha}, we have, \[B_{j,8}(\M_{8d})= \tbinom{10}{j}-3\cdot \tbinom{9}{j}.\]

We have, $B_{j,8}(\M_{8e})=0$, as an $\M_{8e}$ configuration contains exactly $8$ points $PG(3,3)$. Therefore, by \eqref{eq:B_j_r}, for $j>8$, 

\begin{equation*}
\begin{split}
B_{j,8}&= 4680 \cdot (\tbinom{16}{j}-3\cdot \tbinom{13}{j-1}-42\cdot \tbinom{10}{j}-108\cdot \tbinom{9}{j}-\tbinom{13}{j})+9360\cdot (\tbinom{13}{j}-4\cdot \tbinom{10}{j}-54\cdot \tbinom{9}{j})\\
&+189540\cdot (\tbinom{12}{j}-4\cdot \tbinom{10}{j}-16\cdot \tbinom{9}{j})+336960\cdot (\tbinom{10}{j}-3\cdot \tbinom{9}{j}),
\end{split}
\end{equation*}
i.e.,
\begin{equation*}\label{BJ8}
B_{j,8}= 4680 \cdot \tbinom{16}{j}+4680\cdot \tbinom{13}{j}-14040\cdot \tbinom{13}{j-1}-655200\cdot \tbinom{10}{j}-5054400\cdot \tbinom{9}{j}.
\end{equation*}

\section{Configurations of rank $9$}
\label{r9}
A configuration  $S$ of rank $9$ is,  by definition, contained  in the set of points $\mQ$ of a quadric of $PG(3,3)$ defined by some quadratic form $f(X_0, \dots, X_3)$.
The points of $\mQ$ are a rank $9$ configuration if and only if $I_2(\mQ)$ is one dimensional. As seen in Table \ref{tab:table1}, there are six $G$-orbits of quadratic forms on $PG(3,3)$  represented by 
\[\varphi_1=X_0X_1,\, \varphi_2=X_0X_1-X_2X_3, \, \varphi_3= X_1^2 + X_3^2- X_0X_2, \,
\varphi_4=X_0X_2-X_1^2, \, \varphi_5=X_0^2, \,\varphi_6=X_0^2 + X_1^2.\]
The quadric $\mQ_i = V(\varphi_i)$ is a rank $9$ configuration if and only if $I_2(V(\varphi_i))$ is the one dimensional vector space generated by $\varphi_i$.
This is not the case for i) $\varphi_6$, ii) $\varphi_5$ and iii) $\varphi_4$ because: i) $V(\varphi_6)$ is a line which has rank $3$, ii) $V(\varphi_5)$ is a plane which has rank $6$, and iii) $I_2(V(\varphi_4))$ is $2$-dimensional vector space spanned by $\varphi_4$ and $X_0(X_2-X_1)$. 

 The quadrics $\mQ_1, \mQ_2$ and $\mQ_3$ have $22, 16$ and $10$ points respectively. The configurations  $\M_{8a}, \M_{8c}$ and $\M_{8e}$ have $16, 12$ and $8$ points respectively.
We recall from \S \ref{r8}, that the pencil of quadrics defining $\M_{8a}, \M_{8c}$ and $\M_{8e}$ configurations contains an element of  $G \cdot \varphi_1$ , $G \cdot \varphi_2$  and $G \cdot \varphi_3$ respectively. Therefore, i) $\mQ_1$  properly contains an $\M_{8a}$ configuration, ii) $\mQ_2$ properly contains an $\M_{8c}$ configuration, and iii) $\mQ_3$ properly contains an $\M_{8e}$ configuration. Since the containment is proper in each case, and the configurations $\M_{8a}, \M_{8c}, \M_{8e}$ are maximal, it follows that the quadrics $\mQ_1, \mQ_2, \mQ_3$ are $\mC_9$'s.
We summarize this:
\begin{lemma}
The set $\fC_9$ can be partitioned into the following 3 classes:  \begin{enumerate}
\item [$\M_{9a}$]  of size $2^2 \cdot 3 \cdot 5 \cdot 13$, consisting of a pair of hyperplanes of $PG(3,3)$.

\item [$\M_{9b}$] of size $2 \cdot 3^4 \cdot 5 \cdot 13$, consisting of  hyperbolic quadrics of $PG(3,3)$.

\item [$\M_{9c}$] of size $  2^3 \cdot 3^4  \cdot 13$, consisting of elliptic quadrics of $PG(3,3)$.
\end{enumerate}
\end{lemma}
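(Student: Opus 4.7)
The plan is to exploit the bijection between $\mC_9$ configurations $S\subset PG(3,3)$ and quadratic forms $f$ (up to scalar) satisfying $\dim I_2(V(f))=1$. Indeed, a rank $9$ configuration has $\dim I_2(S)=1$, so $S=V(I_2(S))=V(f)$ for a unique $f$ up to scalar; conversely $V(f)$ is a $\mC_9$ precisely when the only quadratic forms vanishing on $V(f)$ are scalar multiples of $f$. Since the property of being a $\mC_9$ is $G$-invariant, it suffices to test one representative from each of the six $G$-orbits of quadratic forms on $PG(3,3)$ listed in Table~\ref{tab:table1}, represented by $\varphi_1=X_0X_1$, $\varphi_2=X_0X_1-X_2X_3$, $\varphi_3=X_1^2+X_3^2-X_0X_2$, $\varphi_4=X_0X_2-X_1^2$, $\varphi_5=X_0^2$, $\varphi_6=X_0^2+X_1^2$.

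First I would eliminate three orbits. The quadric $V(\varphi_5)$ is a hyperplane, which is a $\mC_6$ (as established in earlier sections). The quadric $V(\varphi_6)$ collapses to the line $X_0=X_1=0$ over $\bF_3$ because $-1$ is a non-square, and a line is a $\mC_3$. For $\varphi_4$, a direct count shows $V(\varphi_4)$ has $13$ points forming four concurrent lines through the apex $e_3$ with no three coplanar, i.e.\ a $\M_{8b}$ configuration, hence a $\mC_8$ rather than a $\mC_9$.

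For the three remaining forms $\varphi_1,\varphi_2,\varphi_3$, the key step is to invoke the pencil data recorded in \S\ref{r8}. The tuples $\nu(T)$ computed there exhibit, in the defining pencils of $\M_{8a}$, $\M_{8c}$, $\M_{8e}$, an element in the orbits $G\cdot\varphi_1$, $G\cdot\varphi_2$, $G\cdot\varphi_3$ respectively. Consequently $V(\varphi_1)$, $V(\varphi_2)$, $V(\varphi_3)$ properly contain maximal rank-$8$ configurations of the corresponding types: $V(\varphi_1)$ has $22$ points against $16$ for $\M_{8a}$, $V(\varphi_2)$ has $(q+1)^2=16$ points against $12$ for $\M_{8c}$, and $V(\varphi_3)$ has $q^2+1=10$ points against $8$ for $\M_{8e}$. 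Strict containment of a maximal $\mC_8$ forces the rank to exceed $8$, while $\varphi_i\in I_2(V(\varphi_i))$ bounds the rank above by $9$; hence each $V(\varphi_i)$ is a $\mC_9$, and these are the classes $\M_{9a}$, $\M_{9b}$, $\M_{9c}$.

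The sizes of the three classes will then be read off from the orbit sizes in Table~\ref{tab:table1}, specialized to $q=3$: there are $\tbinom{40}{2}=780=2^2\cdot 3\cdot 5\cdot 13$ pairs of hyperplanes, $\tfrac{1}{2}(q^6+q^4)(q^3-1)=10530=2\cdot 3^4\cdot 5\cdot 13$ hyperbolic quadrics, and $\tfrac{1}{2}(q^6-q^4)(q^3-1)=8424=2^3\cdot 3^4\cdot 13$ elliptic quadrics, matching the stated sizes. I do not expect a genuine obstacle here: the substantive inputs --- the pencil orbit analysis for $\M_{8a}$, $\M_{8c}$, $\M_{8e}$ in \S\ref{r8} and the quadric point counts from \S\ref{OWE} --- are already in hand, and the present argument is essentially a consolidation of those results into the stated three-class partition of $\fC_9$.
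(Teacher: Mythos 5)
Your proposal is correct and follows essentially the same route as the paper: reduce to the six $G$-orbits of quadratic forms, discard $\varphi_4,\varphi_5,\varphi_6$, and for $\varphi_1,\varphi_2,\varphi_3$ use the pencil data of \S\ref{r8} to exhibit a \emph{proper} containment of a maximal $\mC_8$ (of type $\M_{8a},\M_{8c},\M_{8e}$ respectively), which together with $\varphi_i\in I_2(V(\varphi_i))$ pins the rank at exactly $9$; the sizes are then the orbit sizes from Table~\ref{tab:table1}. The only (harmless) deviation is your treatment of $\varphi_4$, where you identify the cone $V(\varphi_4)$ directly as an $\M_{8b}$ configuration of rank $8$, whereas the paper instead exhibits a second independent element of $I_2(V(\varphi_4))$ --- both arguments suffice.
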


\subsection{$B_{j,9}$ for $j>9$}
\subsubsection*{}
Let $T$ be a $\M_{9a}$ configuration of $22$ points represented by a pair of hyperplanes $\pi_1, \pi_2$ intersecting in a line $L$. 
Since $j>9$, the possible cases for $S \in \fC_i(T)$, $i\leq 8$ are  listed as follows.
\begin{enumerate}
\item[(i)] $S=\M_{8a}$ consisting of a plane $\pi$ and a line $\ell$ not contained in $\pi$. In this case $|\M_{8a}(T)|=2\cdot 12=24$, as there are $2$ choices for $\pi$, and $12$ choices for $\ell$. Therefore, \[|\M_{8a}(T)|\cdot B_{j,9}(\M_{8a}(T))=24 \cdot (\tbinom{16}{j}-3\cdot \tbinom{13}{j-1}-42\cdot \tbinom{10}{j}-\tbinom{13}{j}).\]

\item[(ii)]  $S=\M_{8b}$. For a $\M_{8b}(T)$ configuration of $4$ concurrent lines $L_1, \dots, L_4$ through a point $Q$, no three of which are coplanar, the point $Q$  has to be on the line $L$, and $\pi_1$, $\pi_2$ each  contain two of the lines $\{L_1, \dots, L_4\}$. 
The point $Q$ on $L$ can be chosen in $4$ ways, and the lines $L_1, \dots, L_4$ through $Q$ can be chosen in $9$ ways. Thus, 
 $|\M_{8b}(T)|=4\cdot 9 =36$. Therefore, 
 \[|\M_{8b}(T)|\cdot B_{j,9}(\M_{8b}(T))=36\cdot (\tbinom{13}{j}-4\cdot \tbinom{10}{j}).\]
  
\item[(iii)]  $S=\M_{8c}$. The $\M_{8c}(T)$ configurations can be chosen in the following way: pick  a pair of points $\{P, P'\}$ on the line $L$. A pair of lines $L_1, L_2$ through $P$, and another pair of lines $L_1', L_2'$ through $P'$ such that $L_1, L_2, L_1', L_2'$ are distinct from $L$, and $L_i, L_i'$ are in the plane $\pi_i$ for $i \in \{1, 2\}$.  There are $\tbinom{4}{2}$ ways to pick $\{P, P'\}$ on $L$, and there are  $3^4$ ways to choose the four lines $\{L_1, L_1', L_2, L_2'\}$. Thus, $|\M_{8c}(T)|=486$. Therefore, \[|\M_{8c}(T)|\cdot B_{j,9}(\M_{8c}(T))=486\cdot (\tbinom{12}{j}-4\cdot \tbinom{10}{j}).\]

\item[(iv)] $S=\M_{8d}$ consisting of 
    $(\{L_1,L_2\},\{P_1,P_2,P_3\})$ as described in Lemma \ref{max8}. Such an $S$  can be chosen as follows:
Pick a pair of lines $L_1,L_2$ in one of the planes $\pi_1, \pi_2$ intersecting at a point $Q$ of $L$. There are $2 \cdot 4 \cdot 3 = 24$ ways to pick these. In the other plane, there are $3$ lines $L_1, L_2, L_3$ through $Q$ apart from $L$. 
There are $18$ ways to pick $P_i \in L_i\setminus\{Q\}$ for $i \in \{1, 2, 3\}$  such that $P_1, P_2, P_3$ are  non-collinear.  Thus, $|\M_{8c}(T)|=24 \cdot 18=432$. Therefore, \[|\M_{8d}(T)|\cdot B_{j,9}(\M_{8d}(T))=432 \cdot \tbinom{10}{j}.\]

\item[(v)] $S=\M_{7a}$ consisting of a plane $\pi$ and a point $P$ not on $\pi$. There are $2$ choices for $\pi$ namely $\pi_1, \pi_2$. Any of the $9$ points of the other plane not lying on $L$ can be chosen as $P$. Thus,    $|\M_{7a}(T)|=2\cdot 9=18$.Therefore, \[|\M_{7a}(T)|\cdot B_{j,9}(\M_{7a}(T))=18\cdot \tbinom{13}{j-1}.\]

\item[(vi)]  $S=\M_{7b}$. Let us consider a $\M_{7b}(T)$ configuration represented by $3$ concurrent non-coplanar lines $L_1,L_2,L_3$ through a point $Q$. There are two cases depending on whether $L \in \{L_1,L_2,L_3\}$ or not. 
\begin{enumerate}
    \item [(a)]
 In the former case, we may assume  $L=L_1$. The number of way to choose $Q$ on $L$ is $4$. Then the two other lines $L_2, L_3$ through $Q$ lie in  different planes, and can be chosen in $\tbinom{3}{1}^2 = 9$ ways. This case contributes $36$ to $|\M_{7b}(T)|$.\\

    \item [(b)] If $L \notin \{L_1,L_2,L_3\}$, then two of the lines $\{L_1,L_2,L_3\}$ lie in the same plane. The number of ways of  choosing $Q$ on $L$ is $4$, the number of ways choosing two lines through $Q$ in one of the plane $\pi_1, \pi_2$ is  $2 \cdot \tbinom{3}{2}=6$.  The remaining line through $Q$ in the other plane can be chosen in $\tbinom{3}{1}$ ways. Thus,  this case contributes  $4\cdot 2\cdot 3\cdot 3=72$ to $|\M_{7b}(T)|$.\\
\end{enumerate}

Thus, $|\M_{7b}(T)|=36+72=108$. Therefore, \[|\M_{7b}(T)|\cdot B_{j,9}(\M_{7b}(T))=108\cdot \tbinom{10}{j}.\]

\item[(vii)] $S=\M_{7c}$ represented by three lines $L_1,L_2,L_3$, where $L_2$ and $L_3$ are skew to each other and the line $L_1$ meets both the lines.  There are two cases depending on whether $L \in \{L_1,L_2,L_3\}$ or not. 

\begin{enumerate}
    \item [(a)] If  $L \in \{L_1,L_2,L_3\}$, say $L=L_3$. let $P_i=L\cap L_i$, $i=1,2$. The number of way to choose $P_1$ and $P_2$ is $4\cdot 3$. Then the two lines $L_1$ and $L_2$ can be chosen in $\tbinom{3}{1}^2$. Thus the number of ways of choosing $L_1,L_2,L_3$ is $4\cdot 3 \cdot \tbinom{3}{1}^2=108$.  Thus, this case contributes  $108$  to $|\M_{7c}(T)|$.\\
\item [(b)]  If $ L \notin \{L_1,L_2,L_3\}$,  then exactly one of the two pairs of lines $\{L_1,L_2\}$ and  $\{L_2,L_3\}$ are coplanar, say  $\{L_1,L_2\}$. In this case, the pair  $\{L_2,L_3\}$ can be chosen $4 \cdot 3 \cdot 3$ ways, and $L_1$ can be chosen in $9$ ways. Thus, this case contributes  $2 \cdot 4 \cdot 3 \cdot 3 \cdot 9=648$ to $|\M_{7c}(T)|$.\\
    \end{enumerate}

 Therefore, $|\M_{7c}(T)|=108+648=756$ and 
\[|\M_{7c}(T)|\cdot B_{j,9}(\M_{7c}(T))=756 \cdot \tbinom{10}{j}.\]

\item[(viii)]  $S=\M_{6a}$. In this case $|\M_{6a}(T)|=2$. Therefore, \[|\M_{6a}(T)|\cdot B_{j,9}(\M_{6a}(T))=2\cdot \tbinom{13}{j}.\]

\end{enumerate}

By \eqref{eq:B_j_r_alpha}, for $j>9$, we have,
\begin{align*}
\begin{split}
    B_{j,9}(\M_{9a})&=780\cdot(\tbinom{22}{j}-24 \cdot (\tbinom{16}{j}-3\cdot \tbinom{13}{j-1}-42\cdot \tbinom{10}{j}-\tbinom{13}{j})-36\cdot (\tbinom{13}{j}-4\cdot \tbinom{10}{j})\\
    &-486\cdot (\tbinom{12}{j}-4\cdot \tbinom{10}{j})-432 \cdot \tbinom{10}{j}-18\cdot \tbinom{13}{j-1}-108\cdot \tbinom{10}{j}-756 \cdot \tbinom{10}{j}-2\cdot \tbinom{13}{j}),
    \end{split}
\end{align*}
i.e. 
\begin{align*}
B_{j,9}(\M_{9a})=780\cdot(\tbinom{22}{j}-24 \cdot \tbinom{16}{j}+1800\cdot \tbinom{10}{j}-486\cdot \tbinom{12}{j}+54\cdot \tbinom{13}{j-1}-62\cdot \tbinom{13}{j}).
\end{align*}

\subsubsection*{}
Let $T$ be a $\M_{9b}$ configuration represented by a hyperbolic quadric $\mH$.  A hyperbolic quadric in $PG(3,3)$ can be written as a disjoint union of $4$ skew lines $H_1, \dots, H_4$ (known as a regulus), and also as a disjoint union of a dual set of $4$ skew lines $V_1, \dots, V_4$ (known as the opposite  regulus). The  set $T$ consists of the $16$ points $H_i \cap V_j$ for $1 \leq i, j \leq 4$.
Since $j>9$, the possible cases for $S \in \fC_i(T)$, $i\leq 8$ are  as follows.
\begin{enumerate}
    \item[(i)] $S=\M_{8c}$ consisting of $4$ lines $L_1, \dots, L_4$ with $L_i, L_j$ being skew if and only if $i \equiv j \mod 2$. Here,  $|\M_{8c}(T)|=\tbinom{4}{2}^2=36$, as $\{L_1, \dots, L_4\}$ consists of $2$ lines from $\{H_1, \dots, H_4\}$ and $2$ lines from $\{V_1, \dots, V_4\}$.Therefore, 
    \[|\M_{8c}(T)|\cdot B_{j,9}(\M_{8c}(T))=36 \cdot (\tbinom{12}{j}-4\cdot \tbinom{10}{j}).\]

    \item[(ii)] $S=\M_{8d}$ consisting of 
    $(\{L_1,L_2\},\{P_1,P_2,P_3\})$ as described in Lemma \ref{max8}. We must have 
    $|\{L_1, L_2\} \cap \{H_1, \dots, H_4\}| = |\{L_1, L_2\} \cap \{V_1, \dots, V_4\}| = 1$, and hence there are $16$ ways to pick $\{L_1, L_2\}$. If $L_1=H_4$ and $L_2=V_4$, the points $\{P_1, P_2, P_3\}$ must be $\{H_1 \cap V_{\sigma(1)}, H_2 \cap V_{\sigma(2)}, H_3 \cap V_{\sigma(3)}\}$ for some permutation $\sigma$ of $\{1, 2, 3\}$. i.e., there are $6$ ways to pick $\{P_1, P_2, P_3\}$. Thus,  $|\M_{8d}(T)|=16\cdot 6=96$. Therefore, \[|\M_{8d}(T)|\cdot B_{j,9}(\M_{8d}(T)) = 96\cdot \tbinom{10}{j}.\]

    \item[(iii)]  $S=\M_{7c}$ represented by three lines $L_1,L_2,L_3$, where $L_2$ and $L_3$ are skew to each other and the line $L_1$ meets both the lines. We must have $L_1 \in \{H_1, \dots, H_4\}$ and $L_2, L_3 \in \{V_1, \dots, V_4\}$ or  $L_1 \in \{V_1, \dots, V_4\}$ and $L_2, L_3 \in \{H_1, \dots, H_4\}$. Thus,
$|\M_{7c}(T)|=2 \cdot \tbinom{4}{2}\cdot \tbinom{4}{1}=48$. Therefore, \[|\M_{7c}(T)|\cdot B_{j,9}(\M_{7c}(T)) = 48\cdot \tbinom{10}{j}.\]
\end{enumerate}

By \eqref{eq:B_j_r_alpha}, for $j>9$, we have, \[B_{j,9}(\M_{9b})=\tbinom{16}{j}-36 \cdot (\tbinom{12}{j}-4\cdot \tbinom{10}{j})-96\cdot \tbinom{10}{j}-48\cdot \tbinom{10}{j}=\tbinom{16}{j}-36 \cdot \tbinom{12}{j}.\]

\subsubsection*{}
Let $T$ be a $\M_{9c}$ configuration represented by an elliptic quadric $\mQ_3 = V(\varphi_3)$ consisting of  $10$ points. The only  size $j$ subset of $T$ for $j >9$, is $T$ itself.
Therefore,  
\[B_{j,9}(\M_{9c})= \tbinom{10}{j}.\]
Finally,  for $j>9$,  
we have by \eqref{eq:B_j_r}:
\begin{equation*}
    \begin{split}
B_{j,9}&=  780\cdot(\tbinom{22}{j}-24 \cdot \tbinom{16}{j}+1800\cdot \tbinom{10}{j}-486\cdot \tbinom{12}{j}+54\cdot \tbinom{13}{j-1}-62\cdot \tbinom{13}{j})\\
&+ 10530\cdot (\tbinom{16}{j}-36\cdot \tbinom{12}{j})+8424\cdot \tbinom{10}{j},
\end{split}
\end{equation*}
i.e.,
\begin{equation}\label{BJ9}
    \begin{split}
B_{j,9}=  780\cdot\tbinom{22}{j}-8190\cdot \tbinom{16}{j}-48360\cdot \tbinom{13}{j}-14025960 \cdot \tbinom{12}{j}+1412424 \cdot \tbinom{10}{j}+42120\cdot \tbinom{13}{j-1}
\end{split}
\end{equation}

\section{Proof of Theorem \ref{main_theorem}}  \label{BTAT}
We recall from \S  \ref{Approach}, that we have defined $B_0(T)=(T^K-1)$. Using \eqref{eq:Bji},\eqref{eq:Bjj},\eqref{eq:BjK},\eqref{BJ3}-\eqref{BJ9} we calculate the values of $B_j(T)$:
\begin{align*}
 B_1(T)=& 40(T^9-1),\\
B_2(T)=& 780(T^8-1),\\
B_3(T)=&9880(T^7-1),\\
B_4(T)=&130(T^7+702T^6-703),\\ 
B_5(T)=& 936(5 T^6 +698 T^5 -703), \\
B_6(T)=& 780(109 T^5 + 4812 T^4 - 4921), \\
B_7(T)=& 1560 (2 T^5 + 663 T^4 + 11286 T^3 - 11951), \\
B_8(T)=&
585(T - 1)(241 T^3 + 16333 T^2 + 131461 T + 131461),\\
B_9(T) =&  520(T - 1)(55 T^3 + 5500 T^2 + 138016 T + 525844),\\
B_{10}(T) =& 104(T-1)(110T^3 + 8435T^2 + 399125T + 4410326), \\
 B_{11}(T)=&3120(T - 1)(T^3 + 100T^2 + 6211T + 171775), \\
B_{12}(T)=&260(T - 1)(2T^3 + 326T^2 + 29837T + 1921619), \\
B_{13}(T)=&40(T - 1)(T^3 + 352T^2 + 61426T + 9659872), \\
B_{14}(T)=&1080(T - 1)(T^2 + 508T + 230582), \\
B_{15}(T)=&18720(T-1)(4T + 7103),\\
B_{16}(T)=&1170(4T + 49739)(T - 1),\\
B_{17}(T)=&20540520(T-1),\\
B_{18}(T)=&5705700(T-1),\\
B_{19}(T)=&1201200(T-1),\\
B_{20}(T)=&180180(T-1),\\
B_{21}(T)=&17160(T-1),\\
B_{22}(T)=&780(T-1).
\end{align*}

Having calculated the polynomials $B_j(T)$, we use them in \eqref{eq:AB1}, to determine the polynomials $A_j(T)$:
\begin{theorem}\label{AJT}
The extended weight enumerator of $C_3$ is given by \[W(X,Y,T)=X^{40}+(T-1)\sum_{i=1}^{40} a_i(T)X^{40-i}Y^i,\] where the polynomials $a_i(T)$ are as follows:
\begin{align*}
a_{18}(T)=& 780,\\
a_{24}(T)= &1170(4 T - 3),\\
a_{26}(T)=&1080(T - 3)(T - 9),\\
a_{27}(T)=&40(T^3 - 26 T^2 + 442 T - 884),\\
a_{28}(T)=&189540(T - 3),\\
a_{30}(T)= &936(100 T^2 - 600 T + 909,)\\
a_{31}(T)=&505440(T - 3)(T - 6),\\
a_{32}(T)= &5265(T - 3)( 17 T^2 - 160 T + 513),\\
a_{33}(T)= &3120(T - 3)(T^3 + 84 T^2 - 972 T + 3000),\\
a_{34}(T)= &63180(T - 3)( T^3 - 5 T^2 - 45 T + 249),\\
a_{35}(T)=& 936(T - 3)( 5 T^4 + 243 T^3 - 6408 T^2 + 51654 T - 142020),\\
a_{36}(T)= &130(T^6 + 523 T^5 - 15635 T^4 + 199500 T^3 - 1349544 T^2 + 4639614 T - 6020826),\\
a_{37}(T)= &9360(T - 3)(T^5 - 30 T^4 + 405 T^3 - 3132 T^2 + 13932 T - 27864),\\
a_{38}(T)= &780(T - 3)(T^6 - 33 T^5 + 507 T^4 - 4698 T^3 + 28242 T^2 - 105678 T + 189054),\\
a_{39}(T)=& 40(T - 3)(T^7 - 35 T^6 + 585 T^5 - 6096 T^4 + 43320 T^3 - 214344 T^2 + 695448 T - 1128816),\\
\begin{split} 
a_{40}(T) =& (T - 3) \left( T^8 - 36 T^7 + 633 T^6 - 7110 T^5 + 56241 T^4 - 325134 T^3 \right. \\ 
 & \qquad \qquad \left. + 1371006 T^2 - 3936114 T + 5856786 \right). \end{split}
\end{align*}
\end{theorem}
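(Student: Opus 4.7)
The plan is to apply equation \eqref{eq:AB1} directly to the polynomials $B_j(T)$ listed in the table immediately preceding the theorem statement. These $B_j(T)$ have been computed in the previous sections via \eqref{eq:Bji}, \eqref{eq:Bjj}, \eqref{eq:BjK}, together with \eqref{BJ3}--\eqref{BJ9}. Here $N = 40$ and the minimum distance of $C_3$ over $\bF_3$ is $d = 18$, so $N - d = 22$; hence every sum
\[ A_j(T) \;=\; \sum_{i = N - j}^{N - d} (-1)^{N + j + i} \tbinom{i}{N - j} B_i(T) \]
involves only the polynomials $B_0(T), B_1(T), \dots, B_{22}(T)$, all of which are already in hand.

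First I would note that for $1 \leq j \leq 17$ the index range $[N - j, N - d]$ is empty, so $A_j(T) \equiv 0$; this reflects the standard fact that the extended codes $C_3 \otimes \bF_{3^m}$ inherit the minimum distance $d = 18$. For $18 \leq j \leq 40$ the right-hand side is a fully explicit integer linear combination of the $B_i(T)$'s, and each $a_i(T)$ is read off by direct substitution and collection of like powers of $T$.

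Next I would justify the factored form $A_j(T) = (T - 1) a_j(T)$ by observing that $(T - 1) \mid B_i(T)$ for every $i \geq 0$: each term $T^{K - k} - 1$ in \eqref{eq:Bji} vanishes at $T = 1$, as does $B_0(T) = T^K - 1$. Equivalently, $W(Z; 1) = 1$, since the extended code ``over $\bF_1$'' contains only the zero codeword. It follows that $(T - 1)$ divides every $A_j(T)$ for $j \geq 1$, so the $a_j(T) \in \bZ[T]$ are well-defined.

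The main obstacle is purely bookkeeping: carrying out on the order of twenty-three signed polynomial combinations without arithmetic error, and a computer algebra system is the natural tool. As built-in consistency checks I would verify (i) that $a_j(T)$ carries a factor $(T - 3)$ for each $j \in \{26, 28, 31, 32, 33, 34, 35, 37, 38, 39, 40\}$, reflecting the fact that the only codeword weights of $C_3$ over $\bF_3$ itself are $\{18, 24, 27, 30, 36\}$ by Table \ref{tab:table1}; and (ii) that for $j \in \{18, 24, 27, 30, 36\}$ the specialization $(T - 1) a_j(T)\big|_{T = 3}$ reproduces the $Z^j$-coefficient of $W_{C_3}(Z) = 1 + (q - 1) W^{(1)}_{C_3}(Z)$ from Section \ref{OWE}. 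Any discrepancy in either check would isolate an arithmetic slip to a specific $B_i(T)$.
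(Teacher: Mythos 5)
Your proposal follows exactly the paper's route: the paper's entire proof of Theorem \ref{AJT} consists of substituting the tabulated $B_j(T)$ (for $0\le j\le 22=N-d$) into \eqref{eq:AB1} and reading off $a_j(T)=A_j(T)/(T-1)$. Your added justifications --- that $(T-1)$ divides every $B_i(T)$ termwise, that the sum is empty for $j\le 17$, and the consistency checks at $T=3$ against Table \ref{tab:table1} --- are correct and only make the computation more verifiable.
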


Having obtained the extended weight enumerator $W(Z;T)$ of $C_3$ over $\bF_3$, we are in a position to compute the quantities $W(Z;q^j)$ for $q=3$.
We calculate the $r$-GWE of $C_3$ listed in Theorem \ref{main_theorem}, by  using the quantities $W(Z;q^j)$ in the result of Theorem \ref{CO}.
\bibliographystyle{amsplain}

\end{document}